\setlist[enumerate]{(\arabic*)}
\crefname{hypothesis}{Hypothesis}{Hypotheses}
\newcommand{\mf}{\mathfrak}
\newcommand{\DSen}{\mathbf{D}_{\operatorname{Sen}}}
 \newcommand{\WM}{{}^{M}W}
 \def\Sh{\mathrm{Sh}}
 \def\Stab{\mathrm{Stab}}
\def\sm{\mathrm{sm}}
\def\nfs{f\kern-0.06em{s}}
\def\Sh{\mathrm{Sh}}
\newcommand{\oscr}{\mathcal{O}}
\newcommand{\Loc}{\operatorname{Loc}}
\newcommand{\colim}{\operatorname{colim}}
\newcommand{\Sen}{\operatorname{Sen}}
\newcommand{\Lmw}{L(\mathfrak{m}_w)}
\newcommand{\ocal}{{\mathcal O}}
\newcommand{\an}{\operatorname{an}}
\def\A{\mathbf A}
\def\C{\mathbf C}
\def\F{\mathbf F}
\def\Q{\mathbf{Q}}
\def\R{\mathbf{R}}
\def\Z{\mathbf{Z}}
\def\Fbar{\overline{\F}}
\def\Qbar{\overline{\Q}}
\def\m{\mathfrak m}
\def\Mod{\mathrm{Mod}}
\def\id{\mathrm{id}}
\def\alg{\mathrm{alg}}
\def\sm{\mathrm{sm}}
\def\SL{\mathrm{SL}}
\def\ord{\mathrm{ord}}
\def\GL{\operatorname{GL}}
\def\Gal{\mathrm{Gal}}
\def\Sym{\mathrm{Sym}}
\def\End{\mathrm{End}}
\def\RHom{\mathop{\mathrm{RHom}}\nolimits}
\def\Frob{\mathop{\mathrm{Frob}}\nolimits}
\def\rhobar{\overline{\rho}}
\def\dR{\mathrm{dR}}
\def\m{\mathfrak{m}}
\newcommand{\la}{\mathrm{la}}
\newcommand{\into}{\hookrightarrow}
\newcommand{\To}{\longrightarrow}
\newcommand{\isoto}{\stackrel{\sim}{\To}}
\newlength{\ownl}
\newcommand{\Id}{{\operatorname{Id}}}
\newcommand{\Lie}{{\operatorname{Lie}\,}}
\newcommand{\Res}{{\operatorname{Res}}}
\newcommand{\GSp}{\operatorname{GSp}}
\newcommand{\PGL}{\operatorname{PGL}}
\newcommand{\Sp}{\operatorname{Sp}}
\newcommand{\HH}{H}
\newcommand{\cC}{\mathcal{C}}
\newcommand{\cF}{\mathcal{F}}
\newcommand{\cO}{\mathcal{O}}
\newcommand{\qq}{\Q}
\newcommand{\varepsilonbar  }{\overline{\varepsilon}}
 \newcommand{\varrhobar   }{\overline{\varrho}}
 \newcommand{\cL}{\mathcal{L}}
\newcommand{\HT}{\operatorname{HT}}
 \newcommand{\Qp}{{\Q_p}}
\newcommand{\Zp}{{\Z_p}}
\newcommand{\Ql}{\Q_l} \newcommand{\Cp}{\C_p}
\newcommand{\Qpbar}{{\overline{\Q}_p}}
\newcommand{\Zpbar}{{\overline{\Z}_p}}
\newcommand{\Qlbar}{\overline{\Q}_{\ell}}
\newcommand{\Fpbar}{{\overline{\F}_p}}
\newcommand{\Flbar}{\overline{\F}_l}
\newcommand{\Jac}{\operatorname{Jac}}
\begin{document}

\title{Modularity theorems for abelian surfaces}
\author{Toby Gee} 
\maketitle

\renewcommand{\thefootnote}{\arabic{footnote}} 
\begin{abstract}This is a brief account of my results with George Boxer, Frank Calegari and Vincent Pilloni on the (potential) modularity of abelian surfaces.
 \end{abstract}

\section{Introduction}\label{sec:intro}
This article gives an overview of the proofs of the main results of~\cite{BCGP,boxer2025modularitytheoremsabeliansurfaces}.
I have attempted to complement the introductions to those papers, and to concentrate on aspects of the proofs that are not already covered in other survey articles.
Consequently, I devote  little space to innovations in the Taylor--Wiles method (in particular, the Calegari--Geraghty method), for which I refer the reader to Calegari's excellent surveys~\cite{MR4680265,MR4363376}.

We begin by recalling the main theorems of the papers. Our first paper~\cite{BCGP} proves the Hasse--Weil conjecture for abelian surfaces (and genus 2 curves) over totally real fields. To recall what this means, let $X$ be a smooth, projective variety over a number field~$F$  with good reduction outside a finite set of primes~$S$.
Associated to~$X$, one may write down a global Hasse--Weil zeta function:
$$\zeta_X(s) = \prod \frac{1}{1 - N(x)^{-s}},$$
where the product runs over all the closed points~$x$ of some (any) smooth proper integral model~$\mathcal{X}/\cO_F[1/S]$ for~$X$.
(Different choices of~$S$ only change~$\zeta_X(s)$ by a finite number of Euler factors. For curves and abelian varieties there is a natural definition of the Euler factors at all places, and our modularity results are compatible with these factors, but we ignore this point from now on.)
The function~$\zeta_X(s)$ is absolutely convergent for~$\mathrm{Re}(s)
> 1 + \dim X$. We have the following:
\begin{conj}[Hasse--Weil Conjecture, cf.~\cite{Serre1969-1970}, in particular Conj.\ C9] \label{conj:serre} The function~$\zeta_X(s)$ extends to a meromorphic function of~$\C$. There exists a positive real number~$A \in \R^{>0}$,  non-zero rational functions~$P_v(T)$ for~$v|S$, and infinite Gamma factors~$\Gamma_v(s)$ for~$v| \infty$ such that:
$$\xi(s) = \zeta_X(s) \cdot  A^{s/2}  \cdot \prod_{v | \infty} \Gamma_v(s)  \cdot \prod_{v|S} P_v(N(v)^{-s})$$
satisfies the functional equation~$\xi(s) = w \cdot \xi(\dim X+1-s)$ with~$w = \pm 1$.
\end{conj}
(In Serre's formulation of the conjecture, the Gamma factors are also given explicitly in terms of the Archimedean Hodge structures of $X$.)

If~$F = \Q$ and~$X$ is a point, then~$\zeta_X(s)$ is  the Riemann zeta function, 
and Conjecture~\ref{conj:serre} follows from Riemann's functional equation.
By work of Hecke and Brauer, the conjecture is known if~$X$ is zero-dimensional, or if more generally
the Galois representations associated to the~$l$-adic
cohomology of~$X$ are \emph{potentially} abelian (e.g.\ an abelian
variety with CM). 
All subsequent progress on Conjecture~\ref{conj:serre} has been via the Langlands program.
Write~  $\Gal_{\Q}$ for the absolute Galois group of~$\Q$; more generally, for any field~$K$,  we write~$\overline{K}$ for a separable closure, and $\Gal_{K}\coloneqq \Gal(\overline{K}/K)$ for the absolute Galois group.
If~$K$ is a local field, then we write~$I_{K}$ for the inertia subgroup of~$\Gal_{K}$.
Via the Grothendieck--Lefschetz trace formula, one writes for each prime~$\ell$ (at least up to a finite number of Euler factors) \[\zeta_X(s)=\prod_{n=0}^{2\dim
    X}L(H^n(X_{\overline{F}},\Qlbar),s)^{(-1)^n}\]where~ $L(H^n(X_{\overline{F}},\Qlbar),s)$ is the $L$-function of the~$\ell$-adic representation $H^n(X_{\overline{F}},\Qlbar)$ of~$\Gal_{\Q}$.
The Langlands conjectures predict that each $L(H^n(X_{\overline{F}},\Qlbar),s)$ is a product of automorphic  $L$-functions; more precisely, we have $L(H^n(X_{\overline{F}},\Qlbar),s)=\prod_jL(\pi_j,s)$, where each~$\pi_j$ is an automorphic representation of~$\GL_{n_j}(F)$ for some integers~$n_j$ with $\sum_jn_j=\dim_{\Qlbar}H^n(X_{\overline{F}},\Qlbar)$. Since (completed) automorphic $L$-functions have meromorphic (usually holomorphic) continuations and functional equations, this prediction implies the Hasse--Weil conjecture.

\begin{defn}\label{defn:modular}
  If~$X/F$ is a curve or an abelian variety, we write $L(X,s)\coloneqq L(H^1(X_{\overline{F}},\Qlbar),s)$, and we say that~$X$ is \emph{modular} if $L(X,s)$ is a product of automorphic $L$-functions.  We say that~$X$ is \emph{potentially modular} if there is a finite extension $F'/F$ such that $X_{F'}$ is modular.
  Similarly, we say that a representation~$\rho:\Gal_F\to\GL_n(\Qlbar)$ is modular if~$L(\rho,s)$ is a product of automorphic $L$-functions, and we say that a representation~$\rhobar:\Gal_F\to\GL_n(\Flbar)$ is modular if it is the reduction modulo~$\ell$ of a modular Galois representation.
\end{defn}
\begin{rem}
  The notion of potential modularity was introduced by Taylor~\cite{MR1954941}, who observed that Brauer's methods apply in this setting, so that if~$X$ is potentially modular, then ~$L(X,s)$ has the expected meromorphic continuation and functional equations.
However one cannot in general say anything about the poles (or lack thereof) of this meromorphic continuation.
\end{rem}

If~$X$ is a curve of genus zero, then (up to bad Euler factors) $\zeta_X(s) = \zeta_F(s) \zeta_F(s-1)$, and Conjecture~\ref{conj:serre} follows immediately. The fundamental work of Wiles~\cite{MR1333035,MR1333036} and the subsequent
work of Breuil, Conrad, Diamond, and Taylor~\cite{CDT,BCDT} proved Conjecture~\ref{conj:serre} for curves~$X/\Q$ of genus one,
since if we write~ $E=\Jac(X)$, then $\zeta_X(s) = \zeta(s) \zeta(s-1)/L(E,s)$ so the modularity of~$E$ implies the holomorphy and functional
equation for~$L(E,s)$. More generally, Taylor's potential modularity results~\cite{MR1954941}  prove Conjecture~\ref{conj:serre} for curves~$X/F$ of genus one over any totally real field.

The methods used in these papers have been vastly generalized over the past~30 years
due to the enormous efforts of many people, and as a consequence one knows for example that if~$F$ is totally real and~$X$ is such that   the Hodge numbers~$h^{p,q} = \dim H^{p,q}_{\dR}(X) = \dim
H^q(X,\Omega^p)$ of~$X$ are at most~$1$ for all~$p$ and~$q$ with $p+q=n$, then $L(H^n(X_{\overline{F}},\Qlbar),s)$ has the expected meromorphic continuation and functional equation (see~\cite[Cor.\ B]{MR3314824}).

Unfortunately, this multiplicity one condition on Hodge numbers is fundamental to the  original Taylor--Wiles method, and there is a paucity of natural geometric examples satisfying this condition. In particular, it fails for curves of genus~$g>1$ and for abelian varieties of dimension~$g>1$, where ~$h^{1,0}=h^{0,1} = g$.   
The main theorem of~\cite{BCGP} is the following.
\begin{theorem}\label{theorem:puppy} Let~$X$ be either a genus two curve or an abelian surface over a totally real field~$F$. Then~$X$ is potentially modular, and  Conjecture~\ref{conj:serre}
holds for~$X$.
\end{theorem}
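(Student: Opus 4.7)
The plan is to attach to $X$ a four-dimensional symplectic $p$-adic Galois representation $\rho_{X,p}\colon \Gal_F \to \GSp_4(\Qpbar)$ coming from $H^1$ of $X$ (in the genus two case, of its Jacobian), with Hodge--Tate weights $\{0,0,1,1\}$, and to prove that $\rho_{X,p}$ is potentially modular: after some totally real solvable extension $F'/F$, the restriction $\rho_{X,p}|_{\Gal_{F'}}$ is associated to a weight $2$ Siegel modular form for $\GSp_4/F'$. Once potential modularity is known, the meromorphic continuation and functional equation of $\zeta_X(s)$ in Conjecture~\ref{conj:serre} follow by combining the known analytic properties of $L$-functions of cuspidal automorphic representations of $\GSp_4$ (and of $\GL_2$ for possible reducible constituents) with Brauer's theorem, exactly in the style of Taylor.

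To prove potential modularity I would follow the standard two-part strategy. First, \emph{residual modularity}: pick a small prime (most naturally $p=3$ or $p=5$) and, via a Moret-Bailly type argument on a suitable moduli space of polarised abelian surfaces with level structure, produce a totally real solvable extension $F'/F$ together with an auxiliary abelian surface $B/F'$ such that $\rhobar_{B,p}\cong \rhobar_{X,p}|_{\Gal_{F'}}$, chosen so that $\rhobar_{B,p}$ is already known to be modular ``from below'': for example, via known $\GL_2$ modularity for Hilbert modular forms (Khare--Wintenberger and its generalisations) combined with tensor induction from a real quadratic subfield, or with the symmetric cube of a Hilbert modular form, or with the theta correspondence from a definite quaternion algebra. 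This transports residual modularity on the $\GL_2$ side to residual modularity of $\rhobar_{X,p}|_{\Gal_{F'}}$.

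The principal step, and the main obstacle, is then a modularity lifting theorem for $p$-ordinary symplectic representations of $\Gal_{F'}$ into $\GSp_4$. The difficulty is that the Hodge--Tate weights $\{0,0,1,1\}$ are non-regular, so the corresponding Siegel cusp forms of weight $2$ are not visible in the usual classical coherent cohomology of the Siegel threefold: they appear only in higher degrees, and the defect $\ell_0$ for $\GSp_4$ over a totally real field is positive, so the naive Taylor--Wiles patching fails. Overcoming this requires a Calegari--Geraghty style \emph{derived} modularity lifting theorem: build a Hida-theoretic ordinary complex of integral coherent cohomology, patch the whole complex rather than a single cohomology module, establish a ``big $R_\infty = \mathbb{T}_\infty$'' statement for the resulting derived Hecke algebra, and then descend to classical weight $2$ using vanishing of non-classical torsion together with the compatibility of the patched Galois representations with those attached to genuine Siegel modular forms. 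This is the technical heart of \cite{BCGP,boxer2025modularitytheoremsabeliansurfaces}. Combined with the residual modularity input, it yields modularity of $\rho_{X,p}|_{\Gal_{F'}}$, hence potential modularity of $X$, and thence Conjecture~\ref{conj:serre}.
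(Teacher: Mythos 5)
Your outline captures the overall architecture of the paper (attach $\rho_{X,p}\colon\Gal_F\to\GSp_4(\Qpbar)$ with Hodge--Tate weights $\{0,0,1,1\}$; reduce Conjecture~\ref{conj:serre} to potential modularity via Brauer induction; establish residual modularity via a Moret-Bailly argument on a moduli space of abelian surfaces; conclude with a Calegari--Geraghty style modularity lifting theorem built on higher Hida complexes, since the defect $\ell_0$ is positive). However, there is a genuine gap in the way you set up the residual modularity step, and it is not a small one.

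You propose to find, via Moret-Bailly, an extension $F'/F$ and an auxiliary abelian surface $B/F'$ with $\rhobar_{B,p}\cong \rhobar_{X,p}|_{\Gal_{F'}}$, ``chosen so that $\rhobar_{B,p}$ is already known to be modular from below'' (via induction, symmetric cube, or theta lifts from $\GL_2$). But the two constraints you want to impose at the single prime $p$ are incompatible: once you demand $\rhobar_{B,p}\cong\rhobar_{X,p}|_{\Gal_{F'}}$, the mod-$p$ representation of $B$ is \emph{forced} to be the given one coming from $X$, and you have no freedom whatsoever to arrange that it be induced, a symmetric cube, or otherwise of a shape whose modularity is already known. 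Making $\rhobar_{X,p}|_{\Gal_{F'}}$ modular is exactly the thing you are trying to prove, so a one-prime Moret-Bailly argument is circular. This is precisely why the paper's Lemma~\ref{lem:pq-switch} uses two auxiliary primes $p$ and $q$: the moduli space parametrises abelian surfaces $B$ with $B[p]\cong A[p]$ (matching) \emph{and} $B[q]\cong\rhobar_q$ for a residual representation $\rhobar_q$ which you are free to choose (e.g.\ induced from a $2$-dimensional representation, so that its modularity follows from the potential modularity of elliptic curves). One then applies the modularity lifting theorem twice: first at $q$ to conclude that $B$ itself is modular (hence $\rhobar_{B,p}=\rhobar_{X,p}|_{\Gal_{F'}}$ is modular in parallel weight $2$), and then at $p$ to conclude that $\rho_{X,p}|_{\Gal_{F'}}$ is modular. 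Your proposal collapses this into a single prime and so does not prove residual modularity.

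Two smaller points. First, Moret-Bailly yields a finite \emph{Galois} totally real extension $F'/F$, not a solvable one; Brauer's theorem applies to any such Galois extension, so the argument is unaffected, but the word ``solvable'' is not justified. Second, the symmetric cube mechanism you mention does appear in the paper, but in the proof of Theorem~\ref{first} over $\Q$ (where it yields residual modularity in the \emph{regular} weight $3$ at $p=2$ and a different classicality theorem is required); for the potential modularity statement of Theorem~\ref{theorem:puppy}, the residual input is arranged directly in parallel weight $2$ via induction, which is the form to which the $\ell_0>0$ Calegari--Geraghty machinery of \cite{BCGP} is applied.
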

(The deduction of  Conjecture~\ref{conj:serre} from potential modularity is straightforward in this case, using that the cohomology of~$X$ is given by the wedge powers of the cohomology in degree~$1$, and  known Langlands functoriality results for wedge powers.)

While Theorem~\ref{theorem:puppy} resolves the Hasse--Weil conjecture for abelian surfaces~$A/\Q$, for many purposes (e.g.\ applications to the Birch--Swinnerton-Dyer conjecture) one wishes to know modularity.

Modularity is known in some cases, using the results recalled above (in particular, the modularity of elliptic curves).
More precisely, by~\cite[Thm.\ 10.2.1]{boxer2025modularitytheoremsabeliansurfaces}, it is known unless $A/\Q$ is ``challenging'' in the sense of~\cite[\S9.2]{BCGP}, which means that either
\begin{enumerate}
\item\label{item:challenging-1} $\End(A_{\Qbar}) = \Z$, or
\item \label{item:challenging-2}  there exists a quadratic field~$K/\Q$ so that~$\End(A) = \Z$ but $\End(A_K) \otimes \Q$ is either~$\Q \oplus \Q$ or a real quadratic field.
\end{enumerate}
A natural source of abelian surfaces of type~\ref{item:challenging-2} are those of the form~$\Res_{K/\Q}(E)$ for a non-CM elliptic curve~$E$ which is not isogenous to its~$\Gal(K/\Q)$-conjugate. In this case the modularity of~$A$ would follow from the modularity of~$E$.
If~$K/\Q$ is real quadratic, then $E$ is modular by Freitas--Le Hung--Siksek~\cite{FLS}, while if~$K$ is imaginary quadratic, then the modularity of ~$E$ is known in many cases by work of Caraiani--Newton~\cite{caraiani-newton}.
On the other hand, the endomorphism algebra $\End(A_K) \otimes \Q$ could also be a real quadratic field~$E$ rather than~$\Q \times \Q$, in which case~$A/K$ will be a simple abelian surface of~$\GL_2$-type, and the modularity of such abelian surfaces remains open in general even for real quadratic fields~$K$.

In view of this, we concentrate from now on the ``typical'' case that~$\End_{\Qbar}(A) = \Z$, where one has the more precise expectation that~$L(A,s)=L(\pi,s)$ for some cuspidal automorphic representation~$\pi$ of~$\GSp_4 /\Q$, and Brumer and Kramer~\cite{MR3165645} formulated the \emph{paramodular conjecture}, which gives a precise prescription for the ``optimal'' level structure for an automorphic form corresponding to a given abelian surface; in particular, this in principle reduces the
conjecture for a given~$A$ to an explicit computation of a (finite-dimensional) space of Siegel modular forms. Using the Faltings--Serre method, and  elaborate explicit computations of low weight Siegel modular forms,
developed in part by Poor and
Yuen~\cite{MR3315514,MR3713095,MR3498287}, the modularity of (finitely many, up
to twist) abelian surfaces~$A$ with~$\End_{\Qbar}(A) = \Z$ was established in the papers \cite{MR3315514,BPVY,Berger}.

 If~$A/F$ is an abelian surface, we write~$\rho_{A,p}$ for the Galois representation associated to~\(H^1(A_{\overline{F}},\Z_p)\), which we often think of as a representation
\[\rho_{A,p}: \Gal_{F} \to \GSp_4(\Q_p)\]
with multiplier given by the inverse cyclotomic character~$\varepsilon^{-1}$.
By definition, $A$ is modular if and only if~$\rho_{A,p}$ is modular for some~$p$ (equivalently, for all~$p$).
We also let \(\rhobar_{A,p}\) denote the Galois
 representation associated to \(H^1(A_{\overline{F}},\F_p)\). If \(A\) admits a principal polarization
 of degree prime to \(p\), then we can and do think of \(\rhobar_{A,p}\) as a representation
 \[\rhobar_{A,p}: \Gal_{F} \to \GSp_4(\Fbar_p).\]
 
The main theorem of \cite{boxer2025modularitytheoremsabeliansurfaces} is as follows.

\begin{thm} \label{first}
Let~$A/\Q$ be an abelian surface with a polarization of degree prime to~$3$. Suppose 
the following hold:
\begin{enumerate}
\item \label{surjective} The mod~$3$ representation
$$\rhobar_{A,3}: \Gal_{\Q} \rightarrow \GSp_4(\F_3)$$
is surjective.
\item \label{conditionattwo} The representation $\rhobar_{A,3}|_{\Gal_{{\Q_2}}}$ is unramified, and the characteristic polynomial 
of $\rhobar_{A,3}(\Frob_2)$ is not $(x^2\pm x+2)^2$. 
\item \label{conditionatthree} $A$ has good ordinary reduction at~$3$, and
 the characteristic polynomial
of ~$\Frob_3$ does not have repeated roots.
\end{enumerate}
Then~$A$ is modular.

More precisely, there exists a cuspidal automorphic representation~$\pi$
of~$\GL_4/\Q$ (the transfer of a cuspidal
automorphic representation of~$\GSp_4/\Q$ of weight~$2$)
such that~$L(s,H^1(A)) = L(s,\pi)$. Consequently, $L(s,H^1(A))$ has a holomorphic continuation to~$\C$ and satisfies the expected functional equation.
\end{thm}

\begin{rem}We claim that Theorem~\ref{first} applies to a positive proportion of abelian surfaces over~$\Q$, counted in any reasonable sense.
As one justification of this, suppose that one samples genus two curves
$$X:y^2 + h(x) y = f(x)$$
with~$h(x), f(x) \in \Z[x]$ of degrees~$\le 3$ and~$\le 6$ in any way in which the distributions modulo~$2$ and~$3$ are equidistributed, and considers those curves~$X$ with the following properties:
\begin{enumerate}
\item $\rhobar_{\Jac{X},3}: \Gal_{\Q} \rightarrow \GSp_4(\F_3)$
is surjective,
\item $X$ has good reduction at~$2$,
\item $X$ has good ordinary reduction at~$3$,
\item $\rhobar_{\Jac(X),3}(\Frob_2)$ does not have characteristic polynomial  $(x^2\pm x+2)^2$,
\item The characteristic polynomial of~$\Frob_3$ has distinct eigenvalues.
\end{enumerate}
 Theorem~\ref{first} proves the modularity of~$\Jac(X)$ for any such~$X$, and one can check~\cite[\S 10.1]{boxer2025modularitytheoremsabeliansurfaces} that these~$X$ form a subset of density
$\frac{5551}{46656} = 0.1189\ldots $ Another point of comparison is with the curves in the database~\cite{LMFDB} (see also~\cite{database}). This contains~$63107$ genus two curves~$X/\Q$  with $\End \Jac(X)_{\Qbar} = \Z$, and Theorem~\ref{first} applies to~$11384$ of them.
\end{rem}

\section{The 2--3 switch}
\label{sec:2-3-switch}
The proof of Theorem~\ref{first} follows Wiles's strategy for proving the modularity of semistable elliptic curves, and in particular, we make use of an analogue of 
the~$3$-$5$ switch used by Wiles~\cite{MR1333035}
 to prove residual modularity. That switch exploited the rationality of certain twists of the modular curve~$X(5)/\Q$. In our case, we use a rational moduli
 space of abelian surfaces to carry out a~$2$-$3$ switch.

In outline, the $2$-$3$ switch proving Theorem~\ref{first} divides into three steps as follows.
\begin{enumerate}[Step \arabic*]    \item\label{item:B_2 modular} Show that~$\rhobar_{B,2}$ is modular for many abelian surfaces~$B/\Q$. (See Lemma~\ref{lem: A5 solvable potential modularity B[2]}.)   \item \label{item:A-gives-B}  Show that for any abelian surface~$A$ as in Theorem~\ref{first}, there exists~$B$ as in \eqref{item:B_2 modular}  with $\rhobar_{B,3}\cong \rhobar_{A,3}$. (See Lemma~\ref{switching}.)
      \item  \label{2-3-switch-modularity-lifting-step}
      Prove the following (imprecisely stated)       modularity lifting theorem, which applies in particular to the representations~$\rho_{B,2}$ and~$\rho_{A,3}$ for~$A,B$ as in the previous two steps:
      \begin{thm}\label{idealthm:modularity lifting} Suppose that $\rho:\Gal_{\Q}\to\GSp_4 (\Zpbar)$ is unramified at all but finitely many primes and de Rham at~$p$, and:
        \begin{enumerate}[(\roman*)]
          \item $\rhobar:\Gal_{\Q}\to\GSp_4(\overline{\F}_{p})$ is modular.
      \item \label{hyp:big-image} $\rhobar(\Gal_{\Q})$ is large.
      \item $\rho$ is pure. 
      \item \label{hyp:ordinary-p-distinguished}$\rho|_{\Gal_{\Qp}}$ is ordinary,  $p$-distinguished,  and has Hodge--Tate weights $0,0,1,1$.
            \end{enumerate}

      Then~$\rho$ is modular.      
    \end{thm}
                  \end{enumerate}

  \begin{proof}[Proof of Theorem~\ref{first}, given these steps]
 Suppose that~$A$ satisfies the hypotheses of Theorem~\ref{first}, and let~$B$ be as in \ref{item:A-gives-B}.
Then ~$\rhobar_{B,2}$ is modular by~\ref{item:B_2 modular}, so that~$\rho_{B,2}$ is modular by Theorem~\ref{idealthm:modularity lifting}.
Equivalently, $\rho_{B,3}$ is modular, so that~$\rhobar_{B,3}$ is modular.
Since $\rhobar_{A,3}\cong\rhobar_{B,3}$ by assumption, we can apply Theorem~\ref{idealthm:modularity lifting} to deduce that~$\rho_{A,3}$ is modular, as required.
 \end{proof}

 \begin{rem}
   Hypothesis~\ref{hyp:big-image} of Theorem~\ref{idealthm:modularity lifting} is responsible for assumption~\ref{surjective} in Theorem~\ref{first}, while the more serious hypothesis~\ref{hyp:ordinary-p-distinguished} corresponds to assumption~\ref{conditionatthree} there (and is also responsible for~\ref{conditionattwo}).
 \end{rem}

 Our supply of abelian surfaces~$B/\Q$ for~\ref{item:B_2 modular} will be certain Jacobians~$B=\Jac(X)$, where~$X/\Q$ is a  genus two curve. Let~$r_i$ for~$i = 1,\ldots,6$ be the Weierstrass points of~$X$ over~$\Qbar$;  then the non-zero elements of~$B[2]$ are given by the ~$r_i - r_j$ for $i<j$.
Considering the action of~$\Gal_{\Q}$ on the~$r_i$, one has an identification
$S_6 \isoto \Sp_4(\F_2) = \GSp_4(\F_2)$. There are two conjugacy classes of subgroup~$S_5 \subset S_6$; we denote by~$S_5(b)$ the  standard copy of~$S_5$ in~$S_6$ (and below we write $A_5(b)$ for the copy
of~$A_5$ in ~$S_5(b)$).
Thus~$X$ has a rational Weierstrass point (so that~$X$ can be written in the form $y^2=f(x)$ with~$f$ quintic) if and only if~$\rhobar_{A,2}$ factors through a conjugate of~$S_5(b)$. 

 The following lemma, which exploits some coincidences in the representation theory of~$A_5 $, allows us to find many~$X$ for which we know that~$\rhobar_{B,2}$ is modular.
\begin{lemma}\label{lem: A5 solvable potential modularity B[2]}  Suppose that~$X/\Q$ is a genus two curve with a rational Weierstrass point, and that~$B\coloneqq \Jac(X)$ has semistable ordinary or good ordinary reduction at~$2$.
Suppose also that 
\[\rhobar_{B,2}: \Gal_{\Q} \rightarrow \GSp_4(\F_2) \simeq S_6\] has image~$S_5(b)$, and that the image of complex conjugation has conjugacy class~$(**)(**)$.
Then~$\rhobar_{B,2}$ is modular, arising from an ordinary weight 3 Siegel modular form.
\end{lemma}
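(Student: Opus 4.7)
The plan is to exploit the exceptional isomorphism $S_5 \simeq \PGL_2(\F_5)$ (and its restriction $A_5 \simeq \PSL_2(\F_5) \simeq \SL_2(\F_4)$) to reduce modularity of the $4$-dimensional mod $2$ representation $\rhobar_{B,2}$ to the modularity of a related $2$-dimensional mod $5$ representation, for which the (now proved) Serre conjecture applies.

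First, compose $\rhobar_{B,2}$ with the isomorphism $S_5(b) \simeq \PGL_2(\F_5)$ to obtain a projective mod $5$ representation $\bar{r}^{\mathrm{proj}}\colon \Gal_\Q \to \PGL_2(\F_5)$, and lift it to a genuine representation $\bar{r}\colon \Gal_\Q \to \GL_2(\F_5)$. The obstruction in $H^2(\Gal_\Q, \F_5^\times)$ is killed by a standard choice of local behavior and twist (using Tate duality), while controlling the determinant. The hypothesis that complex conjugation has cycle type $(**)(**)$ --- a non-central involution in $\PGL_2(\F_5)$ --- lifts to an element of $\GL_2(\F_5)$ of determinant $-1$, so $\bar{r}$ is odd; the rational Weierstrass point on $X$ and the hypothesis on reduction of $B$ at $2$ give further control on the ramification of $\bar{r}$ at $2$. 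Serre's conjecture at $p = 5$ (Khare--Wintenberger--Kisin) then gives a classical cuspidal newform $f$ with $\bar\rho_{f,5} \simeq \bar{r}$.

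To pass from modularity of $f$ to modularity of $\rhobar_{B,2}$, I would use that both representations factor through the same $S_5$-quotient of $\Gal_\Q$: the condition $(**)(**) \in A_5$ implies that the quadratic extension $E/\Q$ cut out by $S_5/A_5$ is totally real, and via $A_5 \simeq \SL_2(\F_4)$ the restriction $\rhobar_{B,2}|_{\Gal_E}$ corresponds (after extending scalars to $\F_4$) to a $2$-dimensional $\tau\colon \Gal_E \to \SL_2(\F_4)$ with $\rhobar_{B,2} \simeq \Ind_{\Gal_E}^{\Gal_\Q} \tau$. A Hilbert modular form on $\GL_2/E$ realizing $\tau$ can be produced from $f$ via these identifications, and an endoscopic/Yoshida-type theta lift from $\GL_2/E$ to $\GSp_4/\Q$ then yields the desired weight $3$ cuspidal Siegel modular form whose mod $2$ Galois representation is $\rhobar_{B,2}$. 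The ordinary and weight $3$ properties follow from the ordinary reduction hypothesis on $B$ at $2$ via Hida theory and a comparison of Hodge--Tate weights.

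The hardest step will be this last one: concretely constructing the weight $3$ ordinary Siegel modular form that realizes $\rhobar_{B,2}$ (rather than merely some representation in its projective class), since the functorial transfer from $f$ must be compatible with the specific $\Sp_4(\F_2)$-lattice structure of $\rhobar_{B,2}$ and must land in the correct (ordinary, weight $3$) Hida family.
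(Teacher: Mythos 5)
There is a genuine gap in the passage from the modular form $f$ back to $\rhobar_{B,2}$: the claim that $\rhobar_{B,2} \simeq \Ind_{\Gal_E}^{\Gal_\Q} \tau$ is false. The restriction $\rhobar_{B,2}|_{\Gal_E}$ (with $E = F^+$ the real quadratic field cut out by $S_5(b)/A_5(b)$) is an \emph{irreducible} $4$-dimensional representation of $\Gal_E$: under the identification $A_5(b) \simeq \SL_2(\F_4)$ it is the unique $4$-dimensional irreducible, namely $\Sym^3$ of the standard $2$-dimensional representation (equivalently $V \otimes V^{(2)}$ by the Steinberg tensor product theorem). An induced representation from the index-two subgroup $\Gal_E$ would necessarily restrict to $\Gal_E$ as a sum of two $2$-dimensional conjugate pieces, so $\rhobar_{B,2}$ is not induced, and consequently a Yoshida-type endoscopic lift from $\GL_2/E$ (which produces exactly such reducible-upon-restriction representations) is the wrong functoriality to invoke. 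The correct transfer is $\Sym^3\colon \GL_2 \to \GSp_4$ applied over $F^+$, followed by solvable descent to $\Q$.

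There is a second, more structural issue with the detour through $S_5(b) \simeq \PGL_2(\F_5)$. You lift the projective mod $5$ representation and apply Serre's conjecture at $p=5$ to get a newform $f$. But the target object $\rhobar_{B,2}$ lives in characteristic $2$, and you never explain how the mod $2$ reduction of (some lift of) $f$ recovers $\tau$, as opposed to merely having the same abstract projective image. The two are not a priori the same Galois representation, and matching them would require an argument you have not supplied. The paper sidesteps this entirely by lifting the $A_5 \hookrightarrow \PGL_2(\C)$ projective representation to characteristic $0$ directly over $F^+$ (Tate's lifting theorem), invoking the odd Artin conjecture for $\GL_2$ over the real quadratic field $F^+$ to get a characteristic-zero Hilbert eigenform, then passing to its ordinary mod $2$ reduction via Hida theory; $\Sym^3$ functoriality and solvable base change with Khare--Wintenberger finish the job. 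That route never changes the residue characteristic and never appeals to induction, which is why it succeeds where your proposal stalls.

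What your proposal gets right: recognizing the totally real quadratic field $E = F^+$ from the $({*}{*})({*}{*})$ condition, the relevance of $A_5 \simeq \SL_2(\F_4)$, and that some kind of $\GL_2 \to \GSp_4$ functoriality plus Hida theory should govern the ordinary/weight-$3$ conclusion. The missing ingredient is the identification $\rhobar_{B,2}|_{\Gal_{F^+}} \cong \Sym^3\varrhobar$, which dictates that the transfer must be symmetric cube rather than automorphic induction.
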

\begin{proof}
  If~$F^+$ is the quadratic field given by the kernel of the composite $\Gal_{\Q}\to S_5 (b)\to \Z/2\Z$, then $\rhobar(\Gal_{F^+})= A_5(b)$, and~$F^+$ is real by the assumption on complex conjugation.
Let
$$\varrhobar:\Gal_{F^+} \rightarrow \SL_2(\F_4) \simeq A_5$$
denote the residual $2$-dimensional Galois representation associated to
this~$A_5$-extension. (There are two such representations which are
permuted by the outer automorphism; choose either.) Either by an easy Brauer character
computation, or as a consequence of the Steinberg tensor product
theorem for~$\SL_2(\F_4)$, we have 
\[\rhobar_{B,2}\cong\Sym^3 \varrhobar.\]

By a theorem of Tate
the composite~$\varrhobar: \Gal_{F^{+}} \rightarrow A_5 \hookrightarrow
\PGL_2(\C)$
lifts to an odd representation
$\varrho:\Gal_{F^+}\to
\GL_2(\C)$
with finite image (which will be some central extension of~$A_5$).
By the odd Artin conjecture for~$\GL_2$ (i.e.\ by the main results
of~\cite{MR3581178} or \cite{MR3904451}), $\varrho$ is modular. By Hida theory, it follows that~$\varrhobar$ is modular, coming from an ordinary Hilbert modular eigenform of parallel weight~$2$.
By symmetric cube functoriality~\cite{MR1923967},
 $\rhobar_{B,2}|_{\Gal_{F^+}}$ is modular, arising from an ordinary weight 3 Hilbert--Siegel modular form.
By solvable base change and a standard use of the Khare--Wintenberger method~\cite{MR2480604}, $\rhobar_{B,2}$ itself is modular, arising from an ordinary weight 3 Siegel modular form, as required.
\end{proof}

\ref{item:A-gives-B} is provided by the following lemma (see~\cite[Lem.\ 9.4.2]{boxer2025modularitytheoremsabeliansurfaces}).
\begin{lemma} \label{switching}
  Let~$A/\Q$ be an abelian surface with a polarization of degree prime to~$3$.  Assume that $\rhobar_{A,3}|_{{\Gal_{\Q_{2}}}}$ is unramified, and 
 the characteristic polynomial 
of $\rhobar_{A,3}(\Frob_2)$ is not $(x^2\pm x+2)^2$.

Then there exists a genus two curve~$X/\Q$ with a rational Weierstrass point,
with~$B = \mathrm{Jac}(X)$ having the following properties:
\begin{enumerate}
\item\label{item:B3 cong A3} $\rhobar_{B,3}\cong\rhobar_{A,3}$. \item $B$ has good ordinary  or semistable ordinary reduction at~$2$,
 and is~$2$-distinguished.
\item $B$ has good ordinary reduction at~$3$. 
\item $\End(B_{\Qbar}) = \Z$.
\item The representation
$$\rhobar_{B,2}: \Gal_{\Q} \rightarrow \GSp_4(\F_2)$$
has image $S_5(b)$, and the image of complex conjugation has
conjugacy class~$({*}{*})({*}{*})$.
\end{enumerate}
\end{lemma}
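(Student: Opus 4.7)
I propose to construct $B$ as a $\Q$-rational point of a suitable twisted moduli space. Set $\rhobar \coloneqq \rhobar_{A,3}$, which we view as a symplectic $\F_3[\Gal_{\Q}]$-module with similitude character $\varepsilon^{-1}$ (well-defined because $A$ carries a polarization of degree prime to $3$). Let $\cH(\rhobar)/\Q$ denote the moduli space parameterizing triples $(X, w, \alpha)$, where $X$ is a smooth genus two curve over $\Q$, $w \in X(\Q)$ is a Weierstrass point, and $\alpha : \Jac(X)[3] \xrightarrow{\sim} \rhobar$ is a symplectic isomorphism of $\Gal_{\Q}$-modules. The first conclusion is then built into the moduli problem; the task is to find a $\Q$-point of $\cH(\rhobar)$ satisfying the remaining conditions.

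\textbf{Rationality of the twist.} Without the level $3$ structure, the moduli of pairs $(X,w)$ is birational to $\A^5$: write $X : y^2 = f(x)$ with $f \in \Q[x]$ monic of degree $5$, and take $w = \infty$. The symplectic level $3$ cover is closely related to the Siegel modular threefold $\cA_2(3)$, which is geometrically rational via its classical identification with the Burkhardt quartic. Using that the similitude character of $\rhobar$ matches the canonical similitude character $\varepsilon^{-1}$ on $\cA_2(3)$, one descends this rationality to show that $\cH(\rhobar)$ is unirational over $\Q$, so in particular $\cH(\rhobar)(\Q)$ is Zariski-dense. This is the genus-two analogue of the rationality of $X(5)$ underlying Wiles's original $3$-$5$ switch.

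\textbf{Imposing the local conditions.} Granted density of $\cH(\rhobar)(\Q)$, weak approximation, Krasner's lemma, and Hilbert irreducibility enforce the remaining conditions. Condition (3), good ordinary reduction at $3$, is an open condition at $3$ and is realised because $A$ itself has good ordinary reduction at $3$ with distinct $\Frob_3$-eigenvalues, so nearby $3$-adic points of $\cH(\rhobar)$ inherit the same local reduction type. For condition (2) and the image part of (5) at $\ell = 2$: the hypothesis that $\rhobar|_{\Gal_{\Q_2}}$ is unramified with $\Frob_2$ acting by a polynomial other than $(x^2 \pm x + 2)^2$ is precisely what permits the existence of a Weil polynomial for an ordinary, $2$-distinguished abelian surface over $\F_2$ compatible with $\rhobar(\Frob_2)$ whose mod $2$ representation has image $S_5(b)$; this is again a $2$-adic open condition. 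The complex conjugation class $(**)(**)$ of (5) is an archimedean open condition (a sign pattern on the real roots of $f$). Finally, condition (4), $\End(B_{\Qbar}) = \Z$, holds off a thin subset of $\cH(\rhobar)(\Q)$, so by Hilbert irreducibility it can be arranged simultaneously with the (open) local conditions.

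\textbf{Main obstacle.} The main obstacle I expect is the rationality step: one must show that the twist $\cH(\rhobar)$ of the level $3$ cover is unirational (or at least has dense $\Q$-points) via an explicit descent of the Burkhardt quartic, or else a direct parameterization of $(X,w)$ with specified mod $3$ symplectic structure on $\Jac(X)$. A secondary subtlety is the local analysis at $2$ that pins down $(x^2 \pm x + 2)^2$ as precisely the obstructing case; this requires enumerating Weil polynomials of ordinary $\F_2$-abelian surfaces compatible with a given $\rhobar(\Frob_2)$ and with a mod $2$ representation of $S_5(b)$-image.
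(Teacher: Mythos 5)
Your overall plan matches the paper's: build $B$ as a rational point of the twisted moduli space of $(X,w,\alpha)$ where $X$ is a genus-two curve, $w$ a rational Weierstrass point, and $\alpha:\Jac(X)[3]\isoto\rhobar_{A,3}$ a symplectic isomorphism, then use weak approximation plus Hilbert irreducibility to impose the local conditions at $\{2,3,\infty\}$ and the generic endomorphism/image conditions. However, there are two genuine gaps in your rationality step that the paper's argument is specifically designed to avoid.

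First, your appeal to the Burkhardt quartic is aimed at the wrong variety. The Burkhardt quartic is (the Satake compactification of) $\cA_2(3)$, which parametrises polarized abelian surfaces with level-$3$ structure, not genus-two curves with a \emph{marked Weierstrass point} and level-$3$ structure. The space $\cH(\rhobar)$ you need is a $6{:}1$ cover of an open subscheme of the relevant twist of $\cA_2(3)$, and rationality (or unirationality) does not pass to finite covers. In addition, your descent step from the untwisted space to the twist by $\rhobar$ is asserted but not carried out; matching similitude characters is necessary but nowhere near sufficient. The paper's actual argument (\cite[Lem.\ 10.2.4]{BCGP}) instead reduces, by a Galois cohomology argument modelled on~\cite[Lem.\ 1.1]{MR1415322}, to a $\PGL_4(\F_3)$-\emph{equivariant} rationality statement for the moduli of genus-two curves with a Weierstrass point and level-$3$ structure for the split representation $1\oplus 1\oplus \varepsilonbar^{-1}\oplus\varepsilonbar^{-1}$, which is the content of~\cite[Thm.~0.0.1]{Weddle}; the equivariance is exactly what allows the cohomological twist to preserve rationality.

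Second, even granting unirationality, that is too weak for the local step. Unirationality gives Zariski-density of $\Q$-points but not weak approximation (unirational threefolds can fail weak approximation, e.g.\ via Brauer--Manin obstructions), whereas you explicitly invoke weak approximation at $2$, $3$, and $\infty$ to land in the prescribed open subsets $\Omega_\ell$. The paper establishes genuine \emph{rationality} of $M(\rhobar_{A,3})$, for which weak approximation does hold, and then combines it with Hilbert irreducibility (as in~\cite[\S 3.4, \S 3.5]{MR2363329}) to arrange the generic conditions (full image $S_5(b)$, hence $\End(B_{\Qbar})=\Z$) simultaneously. Your treatment of the $(x^2\pm x+2)^2$ exclusion at $2$, and of the archimedean condition on complex conjugation, is in the right spirit, though note that the open condition at $2$ concerns the local reduction type and local representation, not the global image $S_5(b)$ (which comes instead from Hilbert irreducibility).
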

\begin{proof}
  We consider the moduli space~$M(\rhobar_{A,3})/\Q$ of genus-two curves~$X$ equipped with a fixed rational Weierstrass point and a symplectic isomorphism~$\rhobar_{\Jac(X),3}\cong\rhobar_{A,3} $.
By definition, any~$\Q$-point of~$M(\rhobar_{A,3})$ gives $B=\Jac(X)$ satisfying~\eqref{item:B3 cong A3}, and $\rhobar_{B,2}(\Gal_{\Q})\subseteq S_5(b)$ (due to the rational point).

By~ \cite[Lem.\ 10.2.4]{BCGP}, the variety $M(\rhobar_{A,3})/\Q$ is rational. (The proof of this lemma is very similar to that of~\cite[Lem.\ 1.1]{MR1415322}: namely, a Galois cohomology argument reduces to checking that the corresponding moduli space for the representation $1\oplus 1\oplus \varepsilonbar^{-1}\oplus\varepsilonbar^{-1}$ is $\PGL_4(\F_3 ) $-equivariantly rational over~$\Q$, which is known by ~\cite[Theorem~0.0.1]{Weddle}.)
In particular, by weak approximation (combined with Hilbert irreducibility as in \cite[\S 3.4, \S 3.5]{MR2363329}), for any finite set of places~$S$ and any nonempty open subsets $\Omega_{\ell}\subset M(\rhobar_{A,3})(\Ql)$ (in the $\ell$-adic topology) for each~$\ell\in S$, we can find a $\Q$-point of~$M(\rhobar_{A,3})$ lying in~$\Omega_{\ell}$ for each~$\ell\in S$, for which $\rhobar_{B,2}(\Gal_{\Q})=S_5 (b)$ (which in turn implies that $\End(B_{\Qbar}) = \Z$).
We then take $S=\{2,3,\infty\}$; the hypothesis that the characteristic polynomial 
of $\rhobar_{A,3}(\Frob_2)$ is not $(x^2\pm x+2)^2$ is used to guarantee the existence of suitable points over~$\Q_2 $, by writing down appropriate abelian surfaces over~$\Q_2 $ for each of the other possibilities for this characteristic polynomial.
\end{proof}

For Theorem~\ref{theorem:puppy}, we use a variant of the above strategy, where we choose primes~$p,q$ splitting completely in our totally real field~$F$, with the further properties that $A$ admits a polarization of degree prime to~$pq$,  the representations~$\rhobar_{A,p}$ and~$\rhobar_{A,q}$ have large image, and~$A$ has good ordinary reduction at all places dividing~$pq$.
Then we have:
\begin{lem}
  \label{lem:pq-switch} Let~$A/F$ be a challenging abelian
  surface over a totally real field.  Then there is a finite Galois extension of
  totally real fields~$F'/F$ such that~$p$ and~$q$ split completely in~$F'$, and  an abelian surface~$B/F'$ with good ordinary reduction at all places dividing~$pq$, such that 
  ~$\rhobar_{B,p}\cong\rhobar_{A,p}|_{\Gal_{F'}}$, while~$\rhobar_{B,q}$ is modular of parallel weight~$2$, and has large image.
\end{lem}
\begin{proof}
  We consider the moduli space~$Y$ of abelian surfaces~$B$ equipped with isomorphisms $B[p]\isoto A[p]$ and $B[q]\cong \rhobar_q$, for any choice of~$\rhobar_q:\Gal_F\to\GSp_4 (\F_q)$.  There is no reason that~$Y(F)$ should be nonempty, but a theorem of Moret-Bailly~\cite{mb} guarantees that we can find~$F'$ as above so that~$Y(F')$ is nonempty, and satisfies the required local conditions at primes dividing~$pq$.
Choosing~$\rhobar_q$ to be induced from a 2-dimensional representation, we are able to arrange (using the (known) potential modularity of elliptic curves) that after a further extension of~$F'$, $\rhobar_{B,q}$ is modular.
\end{proof}
\begin{rem}
  \label{rem:regular weight or not}As well as the (important!) difference between modularity and potential modularity, another significant difference between the Lemma~\ref{lem:pq-switch} and Lemmas~\ref{lem: A5 solvable potential modularity B[2]} and~\ref{switching} is that Lemma~\ref{lem:pq-switch} shows residual potential modularity in weight~$2$, while Lemma~\ref{lem: A5 solvable potential modularity B[2]} shows residual modularity in weight~$3$. (which is regular).

  The Galois representations associated to Siegel modular eigenforms of weight~$k\ge 2$ have Hodge--Tate weights $0,k-2,k-1,2k-3$ (see Section~\ref{sec:Sen-theory} below for the definition of Hodge--Tate weights), while the Galois representations associated to abelian surfaces have Hodge--Tate weights $0,0,1,1$.
It follows that if an abelian surface is modular, corresponding to a Siegel modular form, that modular form must be of weight~$2$; so when we have residual modularity in weight~$3$, we have to ``change weight'' in some way (see Remark~\ref{rem:history-of-BCGP} below for more discussion of this).
On the other hand, if~$k>2$ then the Hodge--Tate weights $0,k-2,k-1,2k-3$ are pairwise distinct, and we say that~$k$ is a ``regular weight'', while~$k=2$ is an ``irregular weight''.
The irregular weight cases behave quite differently (and are in general much more complicated than the regular weight cases) on both the automorphic and Galois sides of the Langlands correspondences, as we will see below.
\end{rem}

The remainder of this survey is devoted to ~\ref{2-3-switch-modularity-lifting-step} (and its analogue in~\cite{BCGP}). This step divides into two parts. Firstly, we use the Taylor--Wiles method to show that~$\rho$ is $p$-adically modular, in the sense that it contributes to a Hida family of Siegel modular forms. This is relatively standard, although the need to consider the primes~$p=2,3$ causes some pain. Secondly, we prove a classicality criterion for weight~$2$ ordinary Siegel $p$-adic modular
forms. It is in this step that the difference mentioned in Remark~\ref{rem:regular weight or not} becomes significant. In~\cite{BCGP}, the relevant classicality theorem is a ``small slope implies classical'' theorem in the style of Coleman, but this approach does not suffice in the situation considered in ~\cite{boxer2025modularitytheoremsabeliansurfaces}. In this case the proof, building upon work of Rodríguez Camargo~\cite{camargo2022locally}, is a generalisation from~$\GL_2$ to~$\GSp_4$ of a part of Lue Pan's remarkable work~\cite{MR4390302}.
In the remainder of this survey, we concentrate on the classicality theorems,  before briefly returning to the Taylor--Wiles method in Section~\ref{sec:tayl-wiles-patch}.

\begin{rem}\label{rem:history-of-BCGP}
 It was well-known for many years that Theorem~\ref{theorem:puppy} could be deduced from strong enough modularity lifting theorems; we refer the reader to~\cite[\S 6]{MR3966765} and \cite[\S 11.2]{MR4680265} for some of the history.
The paper \cite{BCGP} was posted online at the end of 2018, and we found the strategy outlined above for proving Theorem~\ref{first} in March 2019.
At that time, we imagined that we would need to prove a ``low weight mod~$p$ companion form'' result to show that the weight 3 Siegel modular form in Lemma~\ref{lem: A5 solvable potential modularity B[2]} is congruent to a weight 2 Siegel modular form, and then apply the modularity lifting theorems of~\cite{BCGP}.
We still do not know how to prove such companion form theorems, but the situation changed in 2020 with Lue Pan's paper~\cite{MR4390302}.
By the spring of 2022, we were confident that it was possible to use Pan's techniques to prove an appropriate classicality result, but we had not yet proved  (regular weight) modularity lifting theorems for~$\GSp_4 $ which could applied to the representations~$\rhobar_{B,2}$.
After some false starts, we managed to do this in spring 2023, and we wrote~\cite{boxer2025modularitytheoremsabeliansurfaces} over the following two years.
\end{rem}

\section{Galois representations}\label{sec:Galois-theory}
\subsection{Sen theory}\label{sec:Sen-theory}
Let~$\Qp(\zeta_{p^{\infty}})\coloneqq \cup_n\Qp(\zeta_{p^{n}})$ be the cyclotomic extension of~$\Qp$.
Then~$\Gal_{\Qp(\zeta_{p^{\infty}})}$ is the kernel of the cyclotomic character $\varepsilon:\Gal_{\Qp}\to\Z_p^{\times}$.
By the Ax--Sen--Tate theorem, we have $\Cp^{\Gal_{\Qp(\zeta_{p^{\infty}})}}=\widehat{\Qp(\zeta_{p^{\infty}})}$, the completion of~$\Qp(\zeta_{p^{\infty}})$.
Let~ $V/\Qp$ be a finite-dimensional $\Qp$-vector space equipped with a continuous action of~ $\Gal_{\Qp}$, so that $\rho:\Gal_{\Qp}\to\GL(V)$ is a Galois representation.
Then $V\otimes_{\Qp}\Cp$ has a semilinear action of~$\Gal_{\Qp}$, and Sen showed that it descends to~$\widehat{\Qp(\zeta_{p^{\infty}})}$, in the sense that  \[(V\otimes_{\Qp}\Cp)^{\Gal_{\Qp(\zeta_{p^{\infty}})}}\otimes_{\widehat{\Qp(\zeta_{p^{\infty}})}}\Cp\cong V\otimes_{\Qp}\Cp.\]Furthermore, it descends to $\Qp(\zeta_{p^{\infty}})$: there is a unique $\Gal(\Qp(\zeta_{p^{\infty}})/\Qp)$-stable $\Qp(\zeta_{p^{\infty}})$-subspace $\DSen(V)$ of $V\otimes_{\Qp}\Cp$ such that 
\[\DSen(V)\otimes_{\Qp(\zeta_{p^{\infty}})}\Cp\cong V\otimes_{\Qp}\Cp.\]In fact, $\DSen(V)$ is the union of the finite-dimensional $\Gal(\Qp(\zeta_{p^{\infty}})/\Qp)$-stable $\Qp(\zeta_{p^{\infty}})$-subspaces of $V\otimes_{\Qp}\Cp$.

The $\Qp(\zeta_{p^{\infty}})$-vector space $\DSen(V)$ has an action of $\Gal(\Qp(\zeta_{p^{\infty}})/\Qp)$, and thus a linear action of $\Lie(\Gal(\Qp(\zeta_{p^{\infty}})/\Qp))$.
More explicitly, we have the Sen operator $\Theta_{V}$, which is the $\Qp(\zeta_{p^{\infty}})$-linear map given by $\Theta_{V}\coloneqq \log(\gamma)/\log_p(\varepsilon(\gamma))$ 
for any $\gamma\in \Gal(\Qp(\zeta_{p^{\infty}})/\Qp)$ sufficiently close to~$1$.

By definition, $V$ is Hodge--Tate, with Hodge--Tate weights~$h_1,\dots, h_n\in\Z$, if there exists an isomorphism of $\Cp[\Gal_{\Qp}]$-modules \[V\otimes_{\Qp}\Cp\cong\oplus_{i=1}^{n} \Cp(-h_{i}),\]where~$\Cp(n)$ is the $n$th Tate twist; thus~$V$ is Hodge--Tate if and only if the Sen operator $\Theta_{V}$ is semisimple and has eigenvalues in~$\Z$, in which case the Hodge--Tate weights are the negatives of the eigenvalues of~$\Theta_{V}$.

\subsection{Ordinary Galois representations}\label{subsec:ordinary-Galois-reps}
There are various definitions in the literature of ordinary Galois representations; the following will be convenient for us.
\begin{defn}We say that $\rho:\Gal_{\Qp}\to\GL(V)$ is ordinary if there are integers $h_1\le h_2 \dots\le h_n$  such that~$\rho$ is conjugate to an upper-triangular representation \[
     \begin{pmatrix}
\chi_1 & * & \dots & * \\
0 & \chi_2 & \dots & * \\
\vdots & \vdots & \ddots & \vdots \\
0 & 0 & \dots & \chi_n
\end{pmatrix}
  \]where each $\chi_i:\Gal_{\Qp}\to\Q_{p}^{\times}$ is a character with~$\chi_i|_{I_{\Qp}}=\varepsilon^{-h_{i}}$.
\end{defn}It is straightforward to check that an ordinary Galois representation is de Rham if and only if it is Hodge--Tate, i.e.\ if and only if the Sen operator $\Theta_{V}$ is semisimple.
If $h_1<\dots<h_n$, this is automatic (e.g.\ because the eigenvalues of the Sen operator are distinct), but in general it need not hold.
Indeed, a standard example of a representation which is not Hodge--Tate but whose Sen operator has integral eigenvalues is the ordinary representation \[
  \begin{pmatrix}
    1 & \log_p\varepsilon \\ 0 & 1
  \end{pmatrix},
\]which by definition has Sen operator $
\begin{psmallmatrix}
  0 & 1\\ 0& 0
\end{psmallmatrix}
$.

\section{\texorpdfstring{$p$-adic}{p-adic} modular forms and classicality}\label{sec:p-adic-modular-classicality}
We now  review the approach to $p$-adic modular forms and classicality theorems taken in~\cite{boxer2025modularitytheoremsabeliansurfaces}, which  relies on the work of Pan \cite{MR4390302} and its generalisations to other Shimura varieties by Rodríguez Camargo~\cite{camargo2022locally}, as well as the higher Coleman and Hida theories of Boxer--Pilloni ~\cite{boxer2021higher,boxer2023higher}.
There is a long history of such classicality theorems, going back in the case of Coleman theory to \cite{MR1369416}; we highlight in particular Kassaei's paper \cite{MR2219265}, and its generalisation in~\cite{MR3488741}.
Another important ingredient is the families of $p$-adic automorphic forms introduced by Andreatta--Iovita--Pilloni \cite{MR3275848} (see \cite{MR3966765} for a survey).

For simplicity we say almost nothing below about compactifications of Shimura varieties, or about the distinction between cuspidal and usual cohomology.
These both play an important role in the foundations of the theory, but disappear in our main results, as we always localise at a non-Eisenstein maximal ideal of a Hecke algebra.
Similarly, we will sometimes elide the difference between functors on the abelian and derived level where it makes no difference for our final statements.
We will also only work with $p$-adic modular forms of integral weight, as this is all that is needed for our main theorems and allows us to simplify the exposition in places. We say very little about families of $p$-adically varying weight, although these are an important ingredient in the proofs of some results that we state.

We will also be extremely informal in our treatment of $p$-adic functional analysis and condensed mathematics.  A justification for this is that at the time of writing, the ``correct'' foundations for the constructions we discuss are not yet available. In particular, the arguments below use a $p$-adic version of Beilinson--Bernstein localisation;  ideally,  this  should be defined in the framework of the analytic de Rham stack of Rodríguez Camargo~\cite{camargo2024analyticrhamstackrigid}, similarly to Scholze's
treatment of classical Beilinson--Bernstein localisation  in~\cite{scholzerealgeometrization}. This formalism is expected to be available soon, but in the meantime \cite{boxer2025modularitytheoremsabeliansurfaces} proceeds in a somewhat ad hoc fashion.

While the only Shimura varieties considered here are the Siegel threefolds associated to the group~$\GSp_4 $, many of the results that we explain below are proved in  ~\cite{boxer2025modularitytheoremsabeliansurfaces} for Hodge-type Shimura varieties, or (in the case of results requiring a non-Eisenstein localisation) are expected to hold in this generality. Accordingly, where possible we phrase our results without reference to specific features of~$\GSp_4 $, although we caution the reader that
they should turn to~\cite{boxer2025modularitytheoremsabeliansurfaces} to see the precise hypotheses under which each result is proved.

Accordingly, from now on we write $G=\GSp_4 $; when we eventually need to be concrete, we will realise $G$ as the subgroup of $\mathrm{GL}_{4}$  acting on the free $\Z$-modules of rank $4$, with basis $e_1, \cdots, e_{4}$ and
preserving up to a similitude factor the symplectic form with matrix    $$J=\begin{pmatrix}
        0 & S \\
      -S & 0 
   \end{pmatrix}$$ where $S$ is the $2 \times 2$ anti-diagonal matrix with only $1$'s on the anti-diagonal.        We take $P$ to be
the (``block lower-triangular'') Siegel parabolic stabilising $e_3
,e_4 $, and  $B$ the Borel inside it which is upper-triangular in each of the diagonal
$2\times 2$ blocks.
We let~$M$ be the Levi quotient of~$P$, and let $U,U_P$ be the unipotent radicals of $B,P$ respectively.
We
   let $T$ be the diagonal torus. 
  The sets of $M$-dominant and $G$-dominant characters are respectively  denoted by $X^*(T)^{M,+}$ and $X^*(T)^{+}$. 
We let $\mu  \in X_*(T)$ be the minuscule dominant cocharacter $t\mapsto\mathrm{diag}(1,1,t,t)$.

  We let $W$ be the Weyl group of $G$, with length function
$\ell:W\to\Z_{\ge 0}$, and write~$w_0$ for the longest element of~$W$. Let~$\Phi^+$ be the set of positive roots of~$G$, and let $\rho$ be half the sum of the
positive roots. Write~$W_{M}$ for the Weyl group of~$M$, and let $\WM \subseteq W$ be the set of Kostant representatives of $W_M \backslash W$ (i.e.\
those~$w\in W$ with  $wX^{*}(T)^{+}\subseteq X^{*}(T)^{M,+}$; this is a set of
coset representatives of minimal length).

We denote by $\mathfrak{g}$, $\mathfrak{b}$,
$\mathfrak{h},\mathfrak{p},\mathfrak{m},\mathfrak{u}_{\mathfrak{p}}$ the Lie algebras of $G,B,T,P,M,U_P$ respectively. For each $w \in W$, we let~$P_w\coloneqq w^{-1}Pw$, with Lie algebra $\mathfrak{p}_w\coloneqq w^{-1}
\mathfrak{p} w$, and similarly we define
$\mathfrak{u}_{\mathfrak{p}_w}$, $\mathfrak{m}_w$, and so on. For~$w\in \WM$,  $\mf{b}_{\m_{w}}\coloneqq \mf{b}\cap\mf{m}_{w}$ is a Borel in~$\mf{m}_{w}$.

\subsection{Modular forms and the Hodge--Tate period map}
We fix throughout a tame level~$K^{p}\subset\GSp(\A^{\infty,p})$ (which we ultimately choose in order to guarantee that various spaces of $p$-adic modular forms are 1-dimensional, using the theory of newforms developed in~\cite{MR2344630}), and for each open compact subgroup $K_p\subset\GSp_4 (\Qp)$, we let $\Sh_{K_p}/\Cp$ be the analytic adic space attached to (a toroidal compactification of) the Siegel Shimura variety of level~$K_pK^p$.

For each $\kappa \in X^*(T)^{M,+}$ and each finite level Shimura variety $\Sh_{K_p}$,  we have the usual sheaf~$\omega^\kappa$ of modular forms of weight $\kappa$  on  $\Sh_{K_p}$.
The coherent cohomology $\mathrm{R}\Gamma(\Sh_{K_p}, \omega^\kappa)$ has an action of a Hecke algebra~$\mathbf{T}$ (generated by the Hecke operators at the places where~$K^p$ is hyperspecial), and the corresponding eigenclasses can be computed in terms of automorphic forms on~$G$. More precisely, cuspidal automorphic representations contribute according to their Archimedean components~$\pi_{\infty}$, and the essentially tempered~$\pi_{\infty}$ which contribute to coherent components  are the so-called
non-degenerate limits of discrete series.  The upshot for us is that in order to prove that a Galois representation is modular, it suffices to show that its corresponding system of Hecke eigenvalues contributes to some $\mathrm{R}\Gamma(\Sh_{K_p}, \omega^\kappa)$.

We let $\Sh_{\infty} = \lim_{K_p}\Sh_{K_p}$, which is a perfectoid space with  an action of $\Gal_{\Qp}$, 
admitting a $\Gal_{\Qp}$-equivariant Hodge--Tate period map  \[\pi_{\HT}:\Sh_{\infty}\to \mathcal{FL}, \]where $\mathcal{FL}$ is the base change to~$\C_p$ of the (partial) flag variety $P \backslash G$. This was introduced by Scholze~\cite{scholze-torsion}, and has revolutionised the study of $p$-adic modular forms.
As a first illustration of this, let~$\cL_\kappa$ be the $G$-equivariant sheaf on~$\mathcal{FL}$ whose fibre at~$e$ is the inflation from~$M$ to~$P$ of the irreducible representation of~$M$ of highest weight~$\kappa$, and  we set $\omega^{\kappa,\sm} \coloneqq  (\pi_{\HT}^{*}\mathcal{L}_\kappa)^{\sm}$.
By the definition of~$\pi_{\HT}$, one finds that the sheaf $\omega^{\kappa,\sm}$ descends to~$\omega^\kappa$ on each $\Sh_{K_p}$. Thus $\mathrm{R}\Gamma( \Sh_{\infty}, \omega^{\kappa,\sm})$ is a  complex of smooth admissible  $G(\qq_p)$-representations, equal to $\colim_{K_p} \mathrm{R}\Gamma(\Sh_{K_p}, \omega^\kappa)$.
This suggests the possibility of proving results about modular forms by working on the flag variety~$\mathcal{FL}$, an idea exploited  to great effect by Scholze in~\cite{scholze-torsion}.

Returning to the problem of the modularity of Galois representations, a basic difficulty now presents itself: there is no Galois action on  $\mathrm{R}\Gamma( \Sh_{\infty}, \omega^{\kappa,\sm})$, thus no direct connection to Galois representations.
This difficulty was resolved in the case of the modular curve by Pan~\cite{MR4390302}, who invented geometric Sen theory and combined it with Scholze's ideas to  prove remarkable new ``$p$-adic Eichler--Shimura'' results, relating the $\mathrm{R}\Gamma( \Sh_{\infty}, \omega^{\kappa,\sm})$ to \'etale cohomology groups, which naturally have an action of~$\Gal_{\Q}$.
Furthermore, as we will see below, Pan's theory often allows one to reduce to questions on the flag variety, and thus to explicit computations.

In fact, the spaces of classical modular forms $\mathrm{R}\Gamma( \Sh_{\infty}, \omega^{\kappa,\sm})$ do not directly show up in $p$-adic Eichler--Shimura theory; rather, one sees spaces of $p$-adic modular forms which are defined using the flag variety, and are closely related to the higher Coleman theory of Boxer--Pilloni~\cite{boxer2021higher}.
From our point of view, this is a feature rather than a bug: it is these spaces of $p$-adic modular forms to which we can apply the Taylor--Wiles method in order to prove our modularity lifting theorems, and we then consider separately the problem of proving the classicality of a $p$-adic modular form.
This classicality problem is solved by a generalisation of Pan's ideas, in combination with results of Boxer--Pilloni.

\subsection{Completed cohomology}\label{subsec:completed-cohomology}By Scholze's primitive comparison theorem, the cohomology of the structure sheaf $\mathrm{R}\Gamma_{\an}(\Sh_{\infty}, \oscr_{\Sh_{\infty}})$ is naturally identified with  $\widetilde{\mathrm{R}\Gamma}(\Sh_{\infty}, \qq_p) {\otimes}_{\qq_p} \C_p $, 
where \[\widetilde{\mathrm{R}\Gamma}(\Sh_{\infty}, \qq_p)\coloneqq \lim_{n}\colim_{K_{p}}\mathrm{R}\Gamma(\Sh_{K_p},\Zp/p^{n}) \otimes_{\Zp}\Qp\] denotes Emerton's completed \'etale cohomology (with~$\Qp$ coefficients). As well as the action of $\Gal_{\Qp}$ and of the Hecke algebra~$\mathbf{T}$, this has an action of~$\GSp_4 (\Qp)$, and we can consider the (derived) locally analytic vectors $\widetilde{\mathrm{R}\Gamma}(\Sh_{\infty}, \qq_p)^{\la} {\otimes}_{\qq_p} \C_p $, which have an action of the Lie algebra~$\mathfrak{g}$.
We can define subsheaves of smooth and locally analytic vectors for the $\GSp_4 (\Qp)$-action: 
$$\oscr^{\sm}_{\Sh_{\infty}} \subseteq \oscr^{\la}_{\Sh_{\infty}} \subseteq \oscr_{\Sh_{\infty}},$$ and by
 results of Rodríguez Camargo (following Pan), we have a natural identification
 \begin{equation}
   \label{eq:local-analytic-comparison}
   \widetilde{\mathrm{R}\Gamma}(\Sh_{\infty}, \qq_p)^{\la} {\otimes}_{\qq_p} \C_p=\mathrm{R}\Gamma_{\an}(\Sh_{\infty}, \oscr^{\la}_{\Sh_{\infty}}).
 \end{equation}
 
 Fix~$\lambda\in X^*(T)$,  and write~$M_{\lambda}$ for the Verma module for~$\mf{g}$ with highest weight~$\lambda$.
 By~\eqref{eq:local-analytic-comparison} we have \begin{equation}\label{eqn:weight-nu-CC}\RHom_{\mathfrak{b}}(\lambda, \widetilde{\mathrm{R}\Gamma}(\Sh_{\infty}, \qq_p)^{\la})\otimes_{\Qp}\Cp=\mathrm{R}\Gamma_{\an}(\Sh_{\infty},
   \RHom_{\mathfrak{g}}\bigl(M_{\lambda},\oscr^{\la}_{\Sh_{\infty}})\bigr).\end{equation}We think of the left hand side of~\eqref{eqn:weight-nu-CC} as a space of $p$-adic modular forms of weight~$\lambda$; the goal of $p$-adic Eichler--Shimura theory is to understand it in terms of the $\mathrm{R}\Gamma( \Sh_{\infty}, \omega^{\kappa,\sm})$.

 The formula~\eqref{eqn:weight-nu-CC} motivates the following definitions. Write $\ocal(\mf{g},\mf{b})$ for the usual category~$\cO$ of finitely generated left $U(\mf{g})$-modules with locally finite $\mf{b}$-action and semisimple $\mf{h}$-action,
and   $\ocal(\mf{g},\mf{b})_{\alg}$ for the full subcategory of objects all of whose weights are in~$X^{*}(T)$; this category contains  the Verma modules of integral highest weight. The action of~$\mf{b}$ on an object of $\ocal(\mf{g},\mf{b})_{\alg}$ can naturally be integrated to an action of~$B$, so from now on we regard these objects as $(\mf{g},B)$-modules (and we we regard the left hand side of~\ref{eqn:weight-nu-CC} as a smooth $B(\Qp)$-representation).   Writing~$\Mod_{B(\Qp)}^{\sm}(\oscr^{\sm}_{\Sh_{\infty}})$ for the derived category of solid $\oscr^{\sm}_{\Sh_{\infty}}$-modules with a smooth action of~$B(\Qp)$, the right hand side of~\eqref{eqn:weight-nu-CC} suggests that we should consider the functor   \begin{equation}\label{eqn:cat-O-to-Sh}
\begin{split}
\ocal(\mf{g},\mf{b})_{\alg}&\to\Mod^{\sm}_{B(\Qp)}(\oscr^{\sm}_{\Sh_{\infty}}), \\
M&\mapsto \RHom_{\mathfrak{g}}\bigl(M,\oscr^{\la}_{\Sh_{\infty}}).
\end{split}
\end{equation}
  
\subsection{Localisation to the partial flag variety}
A key point is that the functor $\RHom_{\mathfrak{g}}\bigl(-,\oscr^{\la}_{\Sh_{\infty}})$ factors through a natural analogue of Beilinson--Bernstein localisation for the partial flag variety~$\mathcal{FL}$, as we now explain (see Theorem~\ref{thm:factors-through-localization} below).
From now on we regard~$G$ an an affinoid analytic group over~$\Cp$.
There is a natural action of~$G$ on~$\mathcal{FL}$ by right multiplication, which induces an action of~$\mf{g}$ by derivations on~$\cO_{\mathcal{FL}}$.
We have a filtration of  $G$-equivariant coherent sheaves:
$\mathfrak{u}_{\mf{p}}^0 \subseteq \mathfrak{p}^0 \subseteq \mathfrak{g}^0 = \oscr_{\mathcal{FL}}  \otimes \mathfrak{g}$, whose fibres  at a point $x \in \mathcal{FL} $ are $\mathfrak{u}_{\mf{p}_x} = x^{-1} \mathfrak{u}_{\mf{p}} x \subseteq
\mathfrak{p}_x = x^{-1} \mathfrak{p} x \subseteq 
\mathfrak{g}$.
We let~$G_{r}$ be the analytic subgroup of~$G$ consisting of elements reducing to the identity~$e$ modulo~$p^{r}$, and we set $\cO_{G,e}\coloneqq \colim_r\cO_{G_r}$. Then we define~$\hat{U}(\mf{g})\coloneqq \cO_{G,e}^{\vee}$; this is a completion of the universal enveloping algebra~$U(\mf{g})$.
We also write $G_e=\lim_rG_r$, the limit being taken in the category of locally ringed spaces; this has a single point~$e$ (the identity element of~$G$), with structure sheaf~$\cO_{G,e}$.

We now define the ring of universal twisted differential operators \[\tilde{\mathcal{D}}^{\la} = (\oscr_{\mathcal{FL}} {\otimes} \hat{U}(\mathfrak{g}))/\mathfrak{u}_{\mf{p}}^0(\oscr_{\mathcal{FL}} \otimes \hat{U}(\mathfrak{g})),\] and we let  $\mathcal{C}^{\la}  = ( \oscr_{G,e} \otimes \oscr_{\mathcal{FL}} )^{\mathfrak{u}_{\mf{p}}^0}$ where the invariants are for the diagonal action on the two factors, where the action on  $\oscr_{G,e}$ is by left translation. Then~  $\mathcal{C}^{\la}$ is a $\tilde{\mathcal{D}}^{\la}$-module, and it carries an action of $\mathfrak{g}$ (by right translation on  $\oscr_{G,e}$) which commutes with the $\tilde{\mathcal{D}}^{\la}$-module structure. 
We define a localisation  functor from the (derived) category of solid $U(\mathfrak{g})$-modules to the (derived) category of  solid $\tilde{\mathcal{D}}^{\la}$-modules (i.e.\ twisted $D$-modules):   \begin{eqnarray*}
\mathrm{Loc}: \Mod(U(\mathfrak{g})) &\rightarrow &\Mod(\tilde{\mathcal{D}}^{\la}) \\
M& \mapsto & \RHom_{\mathfrak{g}}(M, \mathcal{C}^{\la}).
\end{eqnarray*}For any~$M$, the sheaf~$\Loc(M)$ admits a natural ``horizontal'' action of the centre $Z(\mf{m})$ of the universal enveloping algebra~$U(\mf{m})$, via the natural injection $\oscr_{\mathcal{FL}} \otimes Z(\mf{m})\into U(\mf{m}^{0})$ (with $\mf{m}^{0}=\mf{g}^0/\mathfrak{u}_{\mf{p}}^0$), and in particular it admits an action of~$\mu\in Z(\mf{m})$. If~$M\in\cO(\mf{g},\mf{b})_{\alg}$ then we regard~$\Loc(M)$ as a~$(\mf{g},B)$-equivariant sheaf on~$\mathcal{FL}$; the
$B$-equivariant structure comes from the action of~ $B$ on~ $M$, while the $\mf{g}$-action comes from the $\tilde{\mathcal{D}}^{\la}$-module structure. 

The following theorem is a consequence of the geometric version of Sen theory introduced by Pan and generalized by Rodríguez Camargo.
Its formulation is a generalisation of Pilloni's interpretation \cite{PilloniVB} of Pan's work for the modular curve.
\begin{thm}  \label{thm:factors-through-localization}The functor~\eqref{eqn:cat-O-to-Sh}   factors through the functor $\Loc$; more precisely, for~$M\in\cO(\mf{g},\mf{b})_{\alg}$ we have \[\RHom_{\mathfrak{g}}\bigl(M,\oscr^{\la}_{\Sh_{\infty}}\bigr)=\bigl(\pi_{\HT}^{*}\Loc(M)\bigr)^{\sm},\]      and
  \begin{equation}
    \label{eq:1}
    \RHom_{\mathfrak{g}}(M, \widetilde{\mathrm{R}\Gamma}(\Sh_{\infty}, \qq_p)^{\la})\otimes_{\Qp}\Cp=\mathrm{R}\Gamma_{\an}(\Sh_{\infty},
   \RHom_{\mathfrak{g}}\bigl(M,\oscr^{\la}_{\Sh_{\infty}})\bigr).
  \end{equation}
Furthermore, the action of $\mu \in Z(\mathfrak{m})$  on $\Loc(M)$ via the
horizontal action induces the Sen operator on the left hand side of~\eqref{eq:1}.
\end{thm}
In view of Theorem~\ref{thm:factors-through-localization}, we will sometimes refer to the horizontal action of $\mu$ as ``the Sen operator'' in the following.
\subsection{\texorpdfstring{$p$}{p}-adic Eichler--Shimura}
\label{subsec:p-adic-eichler}
To go further, we use excision with respect to the Bruhat stratification of~$\mathcal{FL}$,
 i.e.\ the decomposition into $B$-orbits  $\mathcal{FL}=P \backslash G = \coprod_{w \in \WM} P \backslash PwB$,  indexed by the Kostant representatives $\WM$; the dimension of~$P \backslash PwB$ is~$\ell(w)$.
We write $j_w:C_{w} = P\backslash P w B\into \mathcal{FL}$ for
the locally closed immersion of the Bruhat cell corresponding to~$w$, and  $C_{w}^\dag$ for the dagger
neighbourhood of~$C_{w}$ in
$\mathcal{FL}$.
We write $j_{w,\Sh_\infty}:\pi_{\HT}^{-1}(C_w)\into \Sh_{\infty}$ for the morphism induced by~$j_w$.

For each~$w\in\WM$, we can consider the composite of~$\Loc$ with restriction to~$C_{w}^{\dagger}$. Let~$M$ be an object of  $\ocal(\mf{g},\mf{b})_{\alg}$, so that $\Loc(M)$ is a $(\mf{g},B)$-equivariant sheaf; since $C_{w}$ is a $B$-orbit, and~$\Loc(M)$ is $B$-equivariant, the sheaf   $\Loc(M)|_{C_{w}^{\dagger}}$ is determined by its fibre at $w$, which is a representation of the stabiliser of $w$ for the $(\mf{g},B)$-action.
This stabiliser admits an explicit description as follows.
The action of~$(\mf{g},B)$ can be upgraded to an action of the semi-direct product $G_e\rtimes B$, which acts on $C_{w}^{\dagger}$ via $(g,b)\mapsto wgb$.
Write ~$\Stab(w)$ for the stabiliser of~$w$ for this action; then it follows from the definitions that the map $(g,b)\mapsto (gb,b)$ is an injection \[\Stab(w)\into P_w\times B.\] It is then elementary to check that~$\Stab(w)$ is generated by its subgroups $\Stab(w)_e=P_{w,e}\times B_e$ and $P_w\cap B\subseteq B$, and in fact \begin{equation}\label{eqn:explicit-Stab-w}(P_w\cap B)_e\backslash (P_{w,e}\times B_e) \rtimes (P_w\cap B).\end{equation}

The explicit description~\eqref{eqn:explicit-Stab-w} allows us to construct a contravariant functor  \[HCS_{w}:\cO(\m_w,\mf{b}_{\mf{m}_w})_{\alg}\to\Mod_{(\mf{g},B)}(C_{w}^{\dagger})^{\mf{u}_{\mf{p}}^{0}}\] where $\cO(\m_w,\mf{b}_{\mf{m}_w})_{\alg}$ is the algebraic category~$\cO$ for the pair $(\m_w,\mf{b}_{\mf{m}_w})$, as defined above, and the target category has the obvious meaning. 
(Here ``HCS'' stands for ``higher Coleman sheaf''.)
This functor is defined as follows: the action of~$\mf{b}_{\mf{m}_w}$ on~$V\in \cO(\m_w,\mf{b}_{\mf{m}_w})_{\alg}$ integrates to an action of~$B_{M_w}=M_w\cap B$, and thus determines an action of $P_w\cap B$. The admissible $\hat{U}(\mf{g})$-module $(V\otimes_{U(\mf{g})}\hat{U}(\mf{g}))^{\vee}$ therefore has actions of~$P_w\cap B$ and of $P_{w,e}$ (the latter action factoring through the action of~$M_{w,e}$, which comes from the action of~$\mf{m}_w$), and one checks that by allowing~$B_{e}$ to act
trivially, we obtain an action of~$\Stab(w)$.
The corresponding $(\mf{g},B)$-equivariant sheaf is~$HCS_w(V)$.

The reason for introducing this functor is that  there is a commutative diagram 
\begin{equation}\label{eqn:HCS-u-homology}\begin{tikzcd}
	\cO(\mf{g},\mf{b})_{\alg} & \Mod_{(\mf{g},B)}(C_{w}^{\dagger})^{\mf{u}_{\mf{p}}^{0}} \\
	\cO(\m_w,\mf{b}_{\mf{m}_w})_{\alg}
	\arrow["j_w^{-1}\Loc", from=1-1, to=1-2]
	\arrow[        "\Cp\otimes^L_{\mathfrak{u}_{\mf{p}_w}}-"', from=1-1, to=2-1]
	\arrow["HCS_{w}"', from=2-1, to=1-2]
\end{tikzcd}\end{equation}
where the functor $\Cp\otimes^L_{\mathfrak{u}_{\mf{p}_w}}-:\Mod(U(\mathfrak{u}_{\mf{p}_w}))\to \Mod(\Cp)$ is the functor of ``Lie algebra homology'', with $H_i(\mathfrak{u}_{\mf{p}_w},M)=H^{-i}(\Cp\otimes_{U(\mf{g})}M)$.
To see that~\eqref{eqn:HCS-u-homology} commutes, one only has to check that the fibres at~$w$ of the $(\mf{g},B)$-equivariant sheaves are isomorphic representations of~$\Stab(w)$.
This boils down to noting that the fibre  $\mathcal{C}^{\la}_w$ is $\cO_{U_{P_w} \backslash G, e}$, with the action of~$\Stab(w)$ being determined by the action of $P_{w,e}$ by left translation, the action of~$B_e$ by right translation, and the action of $P_w \cap B$ by conjugation.

We now return to the cohomology of Shimura varieties. For any sheaf~$\cF$ on a subset of~$\Sh_\infty$ containing $\pi_{\HT}^{-1}(\cC_{w})$, we write \[\mathrm{R}\Gamma_{w}(\Sh_\infty,\cF)\coloneqq \mathrm{R}\Gamma(\Sh_\infty,j_{w,\Sh,!}\cF|_{\pi_{\HT}^{-1}(\cC_{w})}), \]where $j_{w,\Sh,!} $ is the extension by zero functor on abelian sheaves of solid abelian groups. Then we define the functor
\begin{equation}\label{eqn:HC}
\begin{split}
  HC_w:\ocal(\mf{m}_{w},\mf{b}_{\mf{m}_w})_{\alg}&\to\Mod^{\sm}_{B(\Qp)}(\Cp)
                                                  , \\
M&\mapsto \mathrm{R}\Gamma_{w}(\Sh_\infty,(\pi_{\HT}^{*}HCS_{w}(M))^{\sm}).
\end{split}
\end{equation} 
Combining~\eqref{eqn:HCS-u-homology} with Theorem~\ref{thm:factors-through-localization} and the spectral sequence for a filtered complex, we obtain the following theorem, a basic form of the $p$-adic Eichler--Shimura decomposition.
\begin{thm}\label{thm-p-adic-ES2}  For any $M \in \ocal(\mathfrak{g},\mathfrak{b})_{\alg}$,  we have   a spectral sequence:     $$E_1^{p,q} = \oplus_{w \in \WM, \ell(w)=p} \HH^{p+q}(HC_{w}(\Cp\otimes^L_{\mathfrak{u}_{\mf{p}_w}}M)) $$ converging to 
$\HH^{p+q}(\RHom_{\mathfrak{g}}(M, \widetilde{\mathrm{R}\Gamma}(\Sh_{\infty}, \qq_p)^{\la}))\otimes_{\Qp}\Cp$.   The Sen operator is induced by the action of $w\mu \in Z(\mathfrak{m}_w)$ on
$\HH_*(\mathfrak{u}_{\mf{p}_w},M)$. \end{thm}
\subsection{The ordinary part}\label{subsec:ordinary-ES}Let $T^+(\Qp)\coloneqq \{t\in T(\Qp),\ \forall \alpha\in\Phi^+,\ v_p(\alpha(t))\ge 0\}$.
Given a smooth representation~$V$ of~$B(\Qp)$, there is a Hecke action of $T^+(\Qp)$ on~$V^{U(\Zp)}$; as usual, the action of~$t\in T^+(\Qp)$ is defined via the (normalised) trace $V^{tU(\Zp)t^{-1}}\to V^{U(\Zp)}$. The finite slope part $V^{\nfs}$ of~$V$ is by definition the  subspace on which~$T^+(\Qp)$ acts invertibly.
For any ~$\lambda\in X^{*}(T)_{\R}$, we say that the slopes of~$V^{\nfs}$ are at least~$\lambda$ (respectively, are equal to $ \lambda$) if for every $t\in T^{+}(\Qp)$ and every eigenvalue~$\alpha$ of~$t$ acting on~$V^{\nfs}$, we have $v_p(\alpha)\ge v_p(\lambda(t))$ (respectively, $v_p(\alpha)= v_p(\lambda(t))$). 
The spaces~$HC_{w}(M)^{\nfs}$ are very closely related to the higher Coleman theories of Boxer--Pilloni (see~\cite[Thm.\ 4.6.56]{boxer2025modularitytheoremsabeliansurfaces} for a precise statement). The following key slope bound was proved by Boxer--Pilloni (it is essentially~\cite[Cor.\ 6.2.16]{boxer2023higher}).
It is proved by a careful analysis of integral models of Hecke correspondences (and in particular, it is not deduced from a statement on the flag variety).
We will see below that it immediately implies classicality theorems and $p$-adic Eichler--Shimura decompositions for the ordinary part of the cohomology (and more generally for the ``small slope'' part, although we do not discuss that here).
\begin{thm}\label{thm-bounds-on-slopes}  Let $w \in \WM$, and let $M \in \ocal(\mathfrak{m}_w, \mathfrak{b}_{\mf{m}_w})_{\alg}$ be a module generated by  a highest weight vector of weight $\nu$. 
Then the slopes of $HC_{w}(M)^{\nfs}$  are at least $ - \nu + w^{-1}w_{0,M} \rho + \rho$. 
\end{thm}
We fix from now on a character~$\lambda\in X^{*}(T)^{M,+}$, and assume that $V$ is a smooth representation of~$B(\Qp)$ such that the slopes of~$V^{\nfs}$ are at least~$-\lambda$. Then  the \emph{ordinary part} $V^{\ord}$ of~$V$ is by definition the subspace of~$V^{\nfs}$ whose slopes are equal to~$-\lambda$.

In particular, by Theorem~\ref{thm-bounds-on-slopes}, the slopes of $HC_{w}(M_{\lambda})^{\nfs}$ are all at least~$-\lambda$, so we may consider the ordinary part  $HC_{w}(M_{\lambda})^{\ord}$.
 Write~$d=\dim\mf{u}_{\mf{p}}=\dim \Sh_{K_p}$.
The homology groups $H_{i}(\mf{u}_{\mf{p}_w},M_{\lambda})$ all belong to $\cO(\mf{m}_{w},\mf{b}_{\mf{m}_{w}})$, and they can (at least in principle) be computed by a Chevalley--Eilenberg complex. Write $\Lmw_{\nu}$ for the simple quotient of the Verma module for $\mf{m}_{w}$ of highest weight~$\nu$. Then an analysis of this complex    shows that   $H_{d-\ell(w)}(\mf{u}_{\mf{p}_w},M_{\lambda})$ has a unique subquotient isomorphic to  $\Lmw_{\lambda + w^{-1}w_{0,M}\rho +\rho}$, and that every other Jordan--H\"older factor of any $H_{i}(\mf{u}_{\mf{p}_w},M_{\lambda})$ is generated by a highest weight
vector of the form \begin{equation}\label{eqn:highest-weight-JH-factor}\lambda + w^{-1}w_{0,M}\rho +\rho- \sum_{\alpha \in \Phi^+} n_\alpha \alpha,\end{equation}
 where  $n_\alpha\in\Z_{ \geq 0}$ and  $ n_\alpha >0$ for some~$\alpha$.
Applying Theorem~\ref{thm-bounds-on-slopes} to~\eqref{eqn:highest-weight-JH-factor}, it follows that for any such Jordan--H\"older factor, say~$X$, the slopes of~ $HC_w(X)^{\nfs}$ are at least~ $-\lambda+\sum_{\alpha \in \Phi^+} n_\alpha \alpha>-\lambda$, so that~$HC_{w}(X)^{\ord}=0$.
Going back to Theorem~\ref{thm-p-adic-ES2}, we see 
that the slopes of $\HH^{p+q}(\RHom_{\mathfrak{g}}(M_{\lambda}, \widetilde{\mathrm{R}\Gamma}(\Sh_{\infty}, \qq_p)^{\la}))\otimes_{\Qp}\Cp$ are all at least~ $-\lambda$, and that there is a spectral sequence\begin{equation}\label{eqn:ordinary-ES-spectral-sequence}E_1^{p,q} = \oplus_{w \in \WM, \ell(w)=p} \HH^{2p+q-d}(HC_{w}(L(\mathfrak{m}_w)_{\lambda + w^{-1}w_{0,M}\rho + \rho}))^{\ord}\end{equation} converging to  $\HH^{p+q}(\RHom_{\mathfrak{g}}(M_{\lambda}, \widetilde{\mathrm{R}\Gamma}(\Sh_{\infty},
\qq_p)^{\la}))^{\ord}\otimes_{\Qp}\Cp$.

 Let~$\mf{m}$ be a maximal ideal of~$\mathbf{T}$ with corresponding Galois representation $\rhobar_{\mf{m}}:\Gal_{\Q}\to\GSp_4(\Fpbar)$.
We say that $\mf{m}$ is \emph{non-Eisenstein} if $\rhobar_{\mf{m}}$ is irreducible.
Then a comparison of the cuspidal and non-cuspidal versions of~$HC_{w}$, and an analysis of the cohomology of the boundary of the toroidal compactification (which we have only carried out for~$G=\GSp_4 $, although we expect the analogous results to hold more generally) together show that after localising at a non-Eisenstein~$\mf{m}$, each~$HC_{w}$ only has cohomology in degree~$\ell(w)$, and  $\HH^{i}( \RHom_{\mathfrak{g}}\bigl(M_{\lambda},\oscr^{\la}_{\Sh_{\infty}}))_{\mf{m}}^{\ord}$ vanishes outside of degree~$i=3$ (i.e.\ outside of middle degree). Thus the spectral sequence~\eqref{eqn:ordinary-ES-spectral-sequence} degenerates, proving the following theorem.
\begin{thm}\label{thm-p-adic-filtration-ordinary-nonEis}  Suppose that~$\lambda\in X^{*}(T)$, and that~$\mf{m}$ is non-Eisenstein. Then  \[\RHom_{\mathfrak{b}}(\lambda, \widetilde{\mathrm{R}\Gamma}(\Sh_{\infty}, \qq_p)^{\la})_{\m}^{\ord}\otimes_{\Qp}\Cp=\mathrm{R}\Gamma_{\an}(\Sh_{\infty},
   \RHom_{\mathfrak{g}}\bigl(M_{\lambda},\oscr^{\la}_{\Sh_{\infty}})\bigr)_{\mf{m}}^{\ord}\] 
   is concentrated in degree~$3$, and there is a decreasing filtration on 
$\HH^{3}(\Sh_{\infty},
   \RHom_{\mathfrak{g}}\bigl(M_{\lambda},\oscr^{\la}_{\Sh_{\infty}})\bigr)_{\mf{m}}^{\ord}$ with~$i$th graded piece given by \begin{equation}\label{eqn:graded-piece-L-m-w}\oplus_{w \in \WM, \ell(w)=i} \HH^{i}(HC_{w}(L(\mathfrak{m}_w)_{\lambda + w^{-1}w_{0,M}\rho + \rho}))_{\mf{m}}^{\ord}.\end{equation} \end{thm}

Under appropriate dominance hypotheses, the summands in~\eqref{eqn:graded-piece-L-m-w} can be described in terms of the sheaves~$\omega^{\kappa,\sm}$.
Indeed, recall that ~$\cL_\kappa$ is the $G$-equivariant sheaf on~$\mathcal{FL}$ corresponding to the irreducible representation of~$M$ of highest weight~$\kappa$. Unwinding the definitions, one finds that for each~$w\in\WM$,  we have \begin{equation}\label{eqn:cL-kappa-and-HCSw}\cL_{\kappa}|_{\cC_w^{\dagger}}=HCS_{w}(L(\mathfrak{m_{w}})_{-w^{-1}w_{0,M}\kappa}).\end{equation}
Set \[\kappa_w\coloneqq -w_{0,M}w(\lambda+\rho)-\rho,\] so that \[-w^{-1}w_{0,M}\kappa_w=\lambda+w^{-1}w_{0,M}\rho+\rho.\] Provided that~$\kappa_w\in X^{*}(T)^{M,+}$,  it follows from~\eqref{eqn:cL-kappa-and-HCSw} that \[\omega^{\kappa_{w},\sm}\vert_{ \pi_{HT}^{-1}
    C_w^\dag}=(\pi_{\HT}^{*}HCS_{w}(L(\mathfrak{m}_w)_{\lambda+w^{-1}w_{0,M}\rho+\rho} ))^{\sm},\] and (by~\eqref{eqn:HC}, i.e.\ by the definition of~$HC_{w}$) the contribution from~$w$ to~\eqref{eqn:graded-piece-L-m-w} is \begin{equation}\label{eqn:explicit-L-omega-comparison}\HH^{i}(HC_{w}(L(\mathfrak{m}_w)_{\lambda + w^{-1}w_{0,M}\rho +  \rho}))_{\mf{m}}^{\ord}
  = \HH^{i}_{w}(\Sh_\infty,\omega^{\kappa_{w},\sm})_{\mf{m}}^{\ord}.\end{equation}  Additionally, considering the horizontal action of~$\mu$, we see that the  Sen operator acts on $\omega^{\kappa_{w},\sm}\vert_{ \pi_{HT}^{-1}
    C_w^\dag}$  via $\langle \mu , \kappa_w \rangle$.

We now consider the difference between the spaces of ordinary $p$-adic modular forms $\HH^{i}_{w}(\Sh_{\infty},\omega^{\kappa_w,\sm})_{\mf{m}}^{\ord}$ and of ordinary classical modular forms $\HH^{i}(\Sh_{\infty},\omega^{\kappa_w,\sm})_{\mf{m}}^{\ord}$.
  Bearing in mind~\eqref{eqn:explicit-L-omega-comparison}, the slope bound of Theorem~\ref{thm-bounds-on-slopes} shows that for each pair~$v,w\in\WM$, the slopes of $\HH^{i}_{v}(\Sh_{\infty},\omega^{\kappa_w,\sm})^{\nfs}$ are at least \begin{equation}\label{eqn:slope-bound-Hw-v-stratum}-\lambda+\bigl((\lambda+\rho)-v^{-1}w(\lambda+\rho)\bigr).\end{equation}If we assume that $\lambda+\rho\in X^{*}(T)^{+}$, we see that these slopes are at least~ $-\lambda$, and that $\HH^{i}_{v}(\Sh_{\infty},\omega^{\kappa_w,\sm})_{\mf{m}}^{\ord}$ vanishes unless
  $v^{-1}w(\lambda+\rho)=(\lambda+\rho)$; equivalently, unless $\kappa_v=\kappa_w$.
Accordingly, we now consider the subgroup~$W_{\lambda}$ of~$W$ consisting of those~$w'$ with $w'(\lambda+\rho)=(\lambda+\rho)$. This subgroup corresponds to a standard parabolic subgroup~$Q$ of~$G$; for example, if~$\lambda\in X^{*}(T)^{+}$, then this subgroup is trivial and~$Q=B$.
Then $P\backslash P w Q$ is the union of those Bruhat cells~$C_{v}$ for which $\HH^{i}_{v}(\Sh_{\infty},\omega^{\kappa_w,\sm})_{\mf{m}}^{\ord}$ could be nonzero, and we let $j_{w,Q}: P\backslash P w Q\into \mathcal{FL}$, $j_{w,Q,\Sh_\infty}:\pi_{\HT}^{-1}(P\backslash P w Q)\into \Sh_{\infty}$  be the corresponding locally closed immersions.
Writing \[ \mathrm{R}\Gamma_{w,Q}(\Sh_{\infty},\omega^{\kappa_w,\sm})\coloneqq \ \mathrm{R}\Gamma(\Sh_{\infty},j_{w,Q,\Sh,!}\omega^{\kappa_w,\sm}|_{\pi_{\HT}^{-1}(P\backslash P w Q)}),\]we obtain the following classicality result.
\begin{thm}
  \label{thm:ordinary-classicality-Q-orbit}Suppose that $\kappa_w\in X^{*}(T)^{M,+}$, and $\lambda+\rho\in X^{*}(T)^{+}$.
Then \[ \mathrm{R}\Gamma_{w,Q}(\Sh_{\infty},\omega^{\kappa_w,\sm})_{\mf{m}}^{\ord}= \mathrm{R}\Gamma(\Sh_{\infty},\omega^{\kappa_w,\sm})_{\mf{m}}^{\ord}.\]
\end{thm}
\begin{proof}
It suffices to note that for each $v\notin wW_{\lambda}$, we have $\mathrm{R}\Gamma_{v}(\Sh_{\infty},\omega^{\kappa_w,\sm})_{\mf{m}}^{\ord}=0$ by~\eqref{eqn:slope-bound-Hw-v-stratum}.
\end{proof}
\begin{rem}
  The cohomology   $\mathrm{R}\Gamma_{w,Q}(\Sh_{\infty},\omega^{\kappa_w,\sm})_{\mf{m}}^{\ord}$ is supported in degrees~$[\ell(w_{\min}),\ell(w_{\max})]$, where $w_{\min},w_{\max}$ are respectively the minimal and maximal length representatives in~$\WM$ for the double coset~$W_Mw W_{\lambda}$.
\end{rem}\begin{rem}
  As noted above, if~$\lambda\in X^{*}(T)^{+}$ then~ $Q=B$, so that $\HH^{\ell(w)}_{w,Q}(\Sh_{\infty},\omega^{\kappa_w,\sm})=\HH^{\ell(w)}_{w}(\Sh_{\infty},\omega^{\kappa_w,\sm})$, and the Eichler--Shimura decomposition in Theorem~\ref{thm-p-adic-filtration-ordinary-nonEis} takes a particularly simple form.  However, a stronger result (without taking ordinary parts or making a non-Eisenstein localisation) was already known in this case (going back to Faltings--Chai~\cite{MR1083353} in the Siegel case); from our perspective, it can be
  obtained by replacing~$M_{\lambda}$ above with~$V_{\lambda}$, the algebraic representation of~$G$ of highest weight~$\lambda$.
Then~$\Loc(V_{\lambda})$ is  $G$-equivariant, rather than merely $B$-equivariant, and one can compute directly on the whole of~$\mathcal{FL}$, without needing to restrict to Bruhat strata.
\end{rem}

\subsection{Sen equals Cousin}
\label{sec:sen-equals-cousin}
We now make things more explicit in the case of interest to us, where~$\lambda$ is not regular, i.e.\ $\lambda\not\in X^{*}(T)^{+}$.
We now introduce some more explicit notation for~$G=\GSp_4 $.
We label the elements of $T$ by     $t=\mathrm{diag} (z t_1,  zt_2, zt_{2}^{-1}, zt_1^{-1})$, and the characters $X^*(T)$ of $T$ by tuples
   $\kappa = ( k_1,  k_2; w) \in \Z^2 \times \Z$ with $w \equiv k_1 +k_2 \pmod 2$, where$\kappa (t) =z^w  t_1^{k_1}t_2 ^{k_2 }$. Thus a character $( k_1,  k_2; w)$ is $M$-dominant if $k_1 \geq   k_2$, and $G$-dominant if $0 \geq k_1  \geq k_2$.
   We have~$\rho=(-1,-2;0)$.

   The Weyl group~$W$ is generated by $s_\alpha$ and $s_\beta$ where
$s_\alpha(k_1,k_2;w) = (k_2,k_1;w)$ and $s_\beta(k_1,k_2;w) = (-k_1,k_2;
w)$, so that $w_0=s_\alpha s_\beta s_\alpha s_\beta $, and $w_0 (k_1 ,k_2 ;w)=(-k_1 ,-k_2 ;w)$. We have~$W_M=\{\Id,w_{0,M}=s_\alpha\}$. The elements of $\WM$ are ${^0w}=\Id, {^1w}=s_\beta, {^2w}=s_\beta s_\alpha, {^3w}=s_\beta s_\alpha s_\beta$, where $\ell({^iw})=i$.
From now on we take~$\lambda=(1,1;-2)$, so that $\lambda+\rho=(0,-1;-2)$.
Then~$W_{\lambda}=\{1,s_{\beta}\}$, and~$Q$ is the Klingen parabolic.
We have two $Q$-orbits on~$\mathcal{FL}$, namely $C_{{^0w}}\cup C_{{^1w}}$  and $C_{{^2w}}\cup C_{{^3w}}$, with the corresponding~$\kappa_{w}$ being $(2,2;2)$ and $(1,1;2)$ respectively.
This is illustrated in Figure~\ref{figure:Weyl-chambers-Sp4} (which draws a character $(k_1 ,k_2;w)$ at $(k_1 ,k_2 )$).

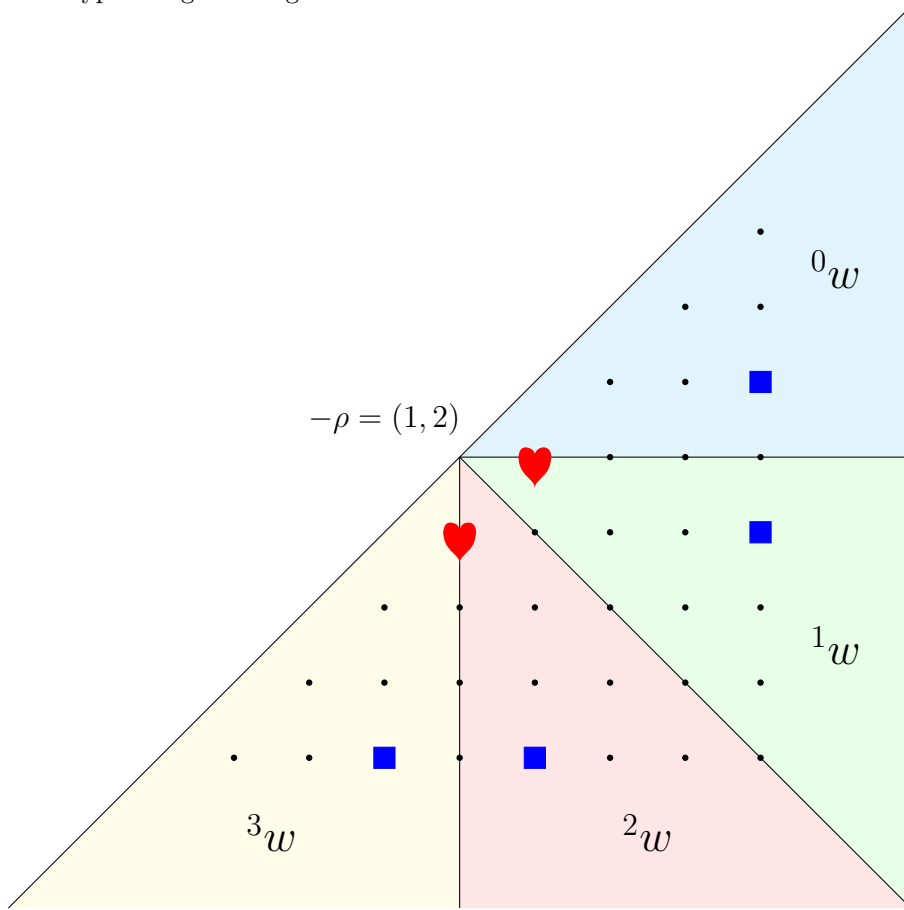
\begin{figure}\caption{(Shifted) $M$-dominant Weyl chambers of weights $(k_1 ,k_2 )$. The $G$-dominant Weyl chamber is labelled by~${}^3w$. The red hearts are at $\kappa_{{}^3w}=\kappa_{{}^2w}=(1,1)$ and $\kappa_{{}^1w}=\kappa_{{}^0w}=(2,2)$ for our~$\lambda=(1,1)$, while the blue squares represent the~$\kappa_w$ for a typical regular weight.}
\label{figure:Weyl-chambers-Sp4}
  \centering
  \begin{tikzpicture}
    
\fill[yellow!10]
    (-5,-4) -- (1,2) -- (1,-4) -- cycle;   
\fill[red!10]
(1,2) -- (1,-4) -- (7,-4) -- cycle;   
\fill[green!10]
    (1,2) -- (7,2) -- (7,-4) -- cycle;   
\fill[cyan!10]
    (7,8) -- (1,2) -- (7,2) -- cycle;   

\foreach \x in {-2,...,5} {
  \pgfmathtruncatemacro{\yMax}{\x}
  \foreach \y in {-2,...,\yMax} {  
    \filldraw[black] (\x,\y) circle (1pt); 
  }
}


\draw (-5,-4) -- (7,8);
\draw (1,2) -- (1,-4);
\draw (1,2) -- (7,2);
\draw (1,2) -- (7,-4);

\node at (0,2.5) {\large $-\rho=(1,2)$};

\draw [red] plot [only marks, mark size=6, mark=heart] coordinates {(1,1) (2,2)};
\draw [blue] plot [only marks, mark size=4, mark=square*] coordinates {(0,-2) (2,-2) (5,1) (5,3)};


\node at (-1.5,-3) {\LARGE ${}^3w$};
\node at (3.5,-3) {\LARGE ${}^2w$};
\node at (6,-0.5) {\LARGE ${}^1w$};
\node at (6,4.5) {\LARGE ${}^0w$};

\end{tikzpicture}
  \end{figure}
In our earlier paper~\cite{BCGP},
the input to our modularity lifting theorem is a class in $H^{0}(\Sh_{\infty},\omega^{(2,2;2),\sm})_{\mf{m}}^{\ord}$, i.e.\ an ordinary Siegel modular form of weight~$2$ (see Remark~\ref{rem:regular weight or not}). Accordingly in that paper we work  with $\mathrm{R}\Gamma_{\id,Q}(\Sh_{\infty},\omega^{(2,2;2),\sm})_{\mf{m}}^{\ord}$, which has cohomology in degrees~$0,1$, and by Theorem~\ref{thm:ordinary-classicality-Q-orbit} agrees with the classical ordinary cohomology  $\mathrm{R}\Gamma(\Sh_{\infty},\omega^{(2,2;2),\sm})_{\mf{m}}^{\ord}$.

However in~\cite{boxer2025modularitytheoremsabeliansurfaces} our input is a class in weight~$3$, rather than weight~$2$.
Using Hida theory, we can produce a congruence to a class in $H^{0}_{\id}(\Sh_{\infty},\omega^{(2,2;2),\sm})_{\mf{m}}^{\ord}$, and ultimately the modularity lifting machinery produces another class in the same cohomology group.  We then need to understand when such a class  extends to $H^0_{\id,Q}(\Sh_{\infty},\omega^{(2,2;2),\sm})_{\mf{m}}^{\ord}$.
We now briefly explain how we do this; in fact for technical reasons we found it more convenient in ~\cite{boxer2025modularitytheoremsabeliansurfaces} to study the problem of extending from $H^2_{{^3w}}(\Sh_{\infty},\omega^{(1,1;2),\sm})_{\mf{m}}^{\ord}$ to $H^2_{{^3w},Q}(\Sh_{\infty},\omega^{(1,1;2),\sm})_{\mf{m}}^{\ord}$, so we do the same here.

From now on we work on $C_{^3w,Q} \coloneqq  C_{^3w} \cup C_{^2w}$. 
We have a short exact sequence of sheaves over $\pi_{HT}^{-1}(C_{^3w,Q})$, corresponding to the stratification of $C_{^3w,Q}$ into $B$-orbits, with $ j_{^3w, \Sh_{\infty}} :  \pi_{HT}^{-1}( C_{^3w}) \hookrightarrow \pi_{HT}^{-1}(C_{^3w,Q})$:
\begin{equation}\label{cous-eq}
 0 \rightarrow  (j_{^3w, \Sh_{\infty}})_! \omega^ {(1,1;2),\sm}\vert_{\pi_{HT}^{-1}(C_{^3w})} \rightarrow  \omega^ {(1,1;2),\sm}\vert_{\pi_{HT}^{-1}(C_{^3w,Q})}   \rightarrow \omega^ {(1,1;2),\sm}\vert_{\pi_{HT}^{-1}(C_{^2w})} \rightarrow 0  
 \end{equation}
Thus  $\mathrm{R}\Gamma_{{^3w},Q}(\Sh_{\infty},\omega^{(1,1;2),\sm})_{\mf{m}}^{\ord}$ is computed by the following complex in degrees $2,3$, where $Cous$ is induced by the class of the extension \eqref{cous-eq}:
\begin{equation}\label{eqn:Cousin-complex} [\HH^2_{^2w}(\Sh_{\infty}, \omega^{(1,1;2),\sm})_{\mf{m}}^{\ord} \xrightarrow{\text{Cous}}
 \HH^3_{^3w}(\Sh_{\infty}, \omega^{ (1,1;2),\sm})_{\mf{m}}^{\ord}].\end{equation}

On the other hand, as noted just below~\eqref{eqn:explicit-L-omega-comparison}, the Sen operator~$\Theta$ on $\HH^{3}( \RHom_{\mathfrak{g}}\bigl(M_{\lambda},\oscr^{\la}_{\Sh_{\infty}}))_{\mf{m}}^{\ord}$ acts by~$\langle\mu,\kappa_{^2w}\rangle=\langle\mu,\kappa_{^3w}\rangle= 0$ on each of $\HH^2_{^2w}(\Sh_{\infty}, \omega^{(1,1;2),\sm})_{\mf{m}}^{\ord}$ and $\HH^3_{^3w}(\Sh_{\infty}, \omega^{ (1,1;2),\sm})_{\mf{m}}^{\ord}$, so it induces  a  map: 
 \begin{equation}\label{eqn:the-Sen-operator-we-will-use}\HH^2_{^2w}(\Sh_{\infty}, \omega^{(1,1;2),\sm})_{\mf{m}}^{\ord} \xrightarrow{\text{Sen}}
 \HH^3_{^3w}(\Sh_{\infty}, \omega^{ (1,1;2),\sm})_{\mf{m}}^{\ord}.\end{equation}
 The key result is the following theorem, showing that ``Sen equals Cousin''.
It is proved by an explicit calculation on the flag variety~$\mathcal{FL}$.
 \begin{thm}\label{thm-Sen-Cousin}   The two maps
   \[\mathrm{Cous}, \mathrm{Sen}:\HH^2_{^2w}(\Sh_{\infty}, \omega^{(1,1;2),\sm})_{\mf{m}}^{\ord} \to
 \HH^3_{^3w}(\Sh_{\infty}, \omega^{ (1,1;2),\sm})_{\mf{m}}^{\ord}\]agree up to a non-zero scalar.                     \end{thm}Combining Theorem~\ref{thm-Sen-Cousin} and~ \eqref{eqn:Cousin-complex}, we see that a class $c\in \HH^2_{^2w}(\Sh_{\infty}, \omega^{(1,1;2),\sm})_{\mf{m}}^{\ord}$ extends to $\HH^{2}_{{^3w},Q}(\Sh_{\infty},\omega^{(1,1;2),\sm})_{\mf{m}}^{\ord}$ (equivalently, to $\HH^{2}(\Sh_{\infty},\omega^{(1,1;2),\sm})_{\mf{m}}^{\ord}$) if and only if~$\Sen(c)=0$.
It only remains to relate this condition to  the Sen operator on the Galois representation associated to a $p$-adic modular form.
We do this using the Eichler--Shimura relations; more precisely, we use the following lemma, which is deduced from results of Nekov\'{a}\v{r},
\cite{MR3942040}.

 \begin{lem}\label{lem-Galois-rep-cc}   Let $f \in \HH^0_{^0w}( \Sh_{\infty}, \omega^{(2,2;2),\sm})^{\ord}$ be an ordinary overconvergent modular eigenform with corresponding Galois representation
$\rho_f: \Gal_{\qq} \rightarrow \mathrm{GSp}_4(\Qpbar)$.  Let $\mathfrak{m}_f$ be the corresponding maximal ideal of the Hecke algebra~$\mathbf{T}[1/p]$. 
We assume that $\rhobar_f$ is irreducible and        the Zariski closure of $\rho_f(\Gal_{\qq})$ contains $\mathrm{Sp}_4$. Then $\HH^3(\mathrm{RHom}_{\mathfrak{b}} ( \lambda, \mathrm{R}\Gamma(\Sh_{\infty}, \Qpbar)^{\la}))^{\ord}[\mathfrak{m}_f] = \rho_f \otimes_{\Qpbar} W$ for some finite-dimensional
vector space $W\neq 0$. \end{lem}

Finally, we deduce from this the following classicality theorem. \begin{thm}\label{thm:multiplicity-one-implies-classical}  Let $f \in \HH^0_{^0w}( \Sh_{K^p}, \omega^{(2,2;2),\sm})^{\ord}$ be an ordinary overconvergent modular eigenform with Galois representation
$\rho_f: \Gal_{\Q} \rightarrow \mathrm{GSp}_4(\Qpbar)$.  Let $\mathfrak{m}_f$
be the corresponding maximal ideal. We assume that:  
\begin{enumerate}
\item the Zariski closure of $\rho_f(\Gal_{\Q})$ contains $\mathrm{Sp}_4$. \item The representation $\rho_f\vert_{\Gal_{\Q_p}}$ is de Rham.
\item\label{ass:dimension-graded-add-up} We have $\mathrm{dim}_{\C_p}
  \HH^i_{^iw}(\Sh_{\infty}, \omega^{(2,2;2),\sm})^{\ord}[\mathfrak{m}_f]
  =1$ for $i=0,1$ and  $\mathrm{dim}_{\C_p}
  \HH^i_{^iw}(\Sh_{\infty}, \omega^{(1,1;2),\sm})^{\ord}[\mathfrak{m}_f]
  =1$ for $i=2,3$.     \item The representation $\rhobar_f$ is irreducible.
\end{enumerate}
Then $f$ is a classical Siegel modular form. 
\end{thm}
\begin{proof}
Assumption~\ref{ass:dimension-graded-add-up} guarantees that the passage to the~$\mf{m}_{f}$ torsion is exact, so that  we have induced maps \begin{equation}\label{eqn:Sen-and-Cous-eigenspace}\mathrm{Cous}, \mathrm{Sen}:\HH^2_{^2w}(\Sh_{\infty}, \omega^{(1,1;2),\sm})^{\ord}[\mf{m}_f] \to
 \HH^3_{^3w}(\Sh_{\infty}, \omega^{ (1,1;2),\sm})^{\ord}[\mf{m}_f].\end{equation}
  These two maps agree by Theorem~\ref{thm-Sen-Cousin}.
Since $\rho_f\vert_{\Gal_{\Q_p}}$ is de Rham, the corresponding Sen operator is semisimple (see Section~\ref{subsec:ordinary-Galois-reps}), which by Lemma~\ref{lem-Galois-rep-cc}  implies that the maps~\eqref{eqn:Sen-and-Cous-eigenspace} vanish.
As noted above, this implies that $\HH^2(\Sh_{\infty}, \omega^{(1,1;2),\sm})^{\ord}[\mf{m}_f]\ne 0$, which by Arthur's multiplicity formula implies that $\HH^0(\Sh_{\infty}, \omega^{(2,2;2),\sm})^{\ord}[\mathfrak{m}_f]\ne 0$, whence it is a one-dimensional $\Cp$-vector space spanned by~$f$, as required.
\end{proof}

\section{Modularity lifting}
\label{sec:tayl-wiles-patch}Finally, we very briefly explain the use of modularity lifting theorems in our two papers.
In~\cite{boxer2025modularitytheoremsabeliansurfaces}, we needed to prove Theorem~\ref{idealthm:modularity lifting}, which we deduce from Theorem~\ref{thm:multiplicity-one-implies-classical}. We work with the ordinary higher Coleman families $\mathrm{R}\Gamma_{^iw}(\Sh_{\infty}, \omega^{\kappa,\sm})^{\ord}$, or rather their integral versions, the higher Hida theories of~\cite{boxer2023higher}.
Since for each~$i$, the complex $\mathrm{R}\Gamma_{^iw}(\Sh_{\infty}, \omega^{\kappa,\sm})^{\ord}$ has cohomology only in degree~$i$, a standard application of the usual Taylor--Wiles method is able to prove modularity lifting theorems for each of these higher Hida theories.
(In practice we find it useful to work with Hida families in which the weight varies, for example in proving local-global compatibility, but this is again standard.)
Furthermore, following Diamond~\cite{MR1440309}, the Taylor--Wiles method is able to prove multiplicity one theorems for spaces of modular forms, and we use it to verify hypothesis~\ref{ass:dimension-graded-add-up} in Theorem~\ref{thm:multiplicity-one-implies-classical}.
(It is here that we use the hypothesis of $p$-distinguishedness in assumption ~\ref{hyp:ordinary-p-distinguished} of Theorem~\ref{idealthm:modularity lifting}.)
The only significant difficulty that we have to overcome in applying the Taylor--Wiles method is that we have to consider residual Galois representations with rather small image, namely~$A_5 $ (when~$p=2$) and~$\GSp_4 (\F_3 )$ (when~$p=3$).
This causes us considerable pain, but does not involve any significant innovations.

As we explained above, in our earlier paper~\cite{BCGP} we worked with the complexes $\mathrm{R}\Gamma_{\id,Q}(\Sh_{\infty},\omega^{(2,2;2),\sm})_{\mf{m}}^{\ord}$, which have cohomology in degrees~$0,1$.
Again, there is an integral version, the higher Hida theory introduced by Pilloni  in~\cite{pilloniHidacomplexes}.
Since there is cohomology in multiple degrees, the usual Taylor--Wiles method does not work in this case, and instead we use the version of the Taylor--Wiles method introduced by Calegari--Geraghty~\cite{CG}.
Our main difficulty now is that rather than working over~$\Q$, we are  over an arbitrary totally real field~$F$ of degree~$d$, say, and the analogous complexes (constructed from the cohomology of Hilbert--Siegel Shimura varieties) now have cohomology in degrees~$0,\dots,d$;
but we do not know how to prove the local-global compatibility results needed for the Calegari--Geraghty method unless~$d=1$. We get around this difficulty by working only with primes~$p$ which split completely in~$F$, and for each~$v|p$, considering complexes which at~$v$ behave like~$\mathrm{R}\Gamma_{\id,Q}$, and at the other places above~$p$ behave like~$\mathrm{R}\Gamma_{\id}$.
We are able to compare these complexes, and we show that an appropriately compatible set of cohomology classes combine to give a classical cohomology class.

\section{Future directions}\label{sec:the-future}

 It is hopefully clear that there is considerable scope for improving on Theorem~\ref{first}.
Firstly, by working with Hilbert--Siegel Shimura varieties, it should be possible to extend Theorem~\ref{first} to abelian surfaces over arbitrary totally real fields.
By making a solvable base change to a totally real field with suitable behaviour at the primes above~$2$, such an improvement would allow us to remove Hypothesis~\ref{conditionattwo} in that theorem, which arose due to the interaction between the local conditions at~$2$ and~$3$ in Lemma~\ref{switching}. Secondly, our  theorems should not require the full strength of the ordinarity condition, but only a suitable ``small slope'' condition, which would allow us to relax  Hypothesis~\ref{conditionatthree} in Theorem~\ref{first}.
Finally, many of the arguments in~\cite{boxer2025modularitytheoremsabeliansurfaces} are written for more general Shimura data, and it would be interesting to prove classicality theorems beyond the case of~$\GSp_4 $.
We intend to pursue all of these ideas in future sequels to~\cite{boxer2025modularitytheoremsabeliansurfaces}. 
Finally, one other possible route  to the modularity of abelian surfaces would be to deduce it from an appropriate version of Serre's conjecture for~$\GSp_4 $. Such a deduction would be  analogous to Khare's theorem~  \cite{MR1434905}, which showed that Serre's conjecture implies the Artin conjecture for odd $2$-dimensional representations of~$\Gal_{\Q}$. With this in mind, we use our theorems to prove the following implication  (see~\cite[Lem.\ 10.4.1]{boxer2025modularitytheoremsabeliansurfaces}). 

\begin{lemma}  \label{higherserre} Suppose that for every residual representation:
$$\rhobar: \Gal_{\Q}  \rightarrow \GSp_4(\F_p)$$
satisfying the following conditions:
\begin{enumerate}
\item $\rhobar$ has multiplier~$\varepsilonbar^{-1}$,
\item   $\rhobar$ is absolutely irreducible, and
\item the semi-simplification of $\rhobar |_{\Gal_{\Qp}}$ is a direct sum of characters,
\end{enumerate}
there exists an ordinary classical  Siegel modular form~$f$ for~$\GSp_4/\Q$ of weight at least~$3$                                  such that \[\rhobar_{f,p}\cong\rhobar.\] Then all abelian surfaces~$A/\Q$ are modular.
\end{lemma}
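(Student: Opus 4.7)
The plan is, for each abelian surface $A/\Q$, to choose an auxiliary prime $p$ so that the hypothesis of the lemma applies to $\rhobar_{A,p}$ (giving residual modularity), and the modularity lifting theorem~\ref{idealthm:modularity lifting} then applies to $\rho_{A,p}$ to yield modularity of~$A$.

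The prime $p$ should satisfy: (a) $A$ admits a polarization of degree prime to~$p$; (b) $A$ has good ordinary reduction at~$p$ with $\Frob_p$ having distinct mod-$p$ eigenvalues; (c) $\rhobar_{A,p}$ is absolutely irreducible with image large enough for the lifting theorem. These imply the three conditions of the hypothesis: condition~(1) is automatic, since $\rhobar_{A,p}$ has multiplier $\varepsilonbar^{-1}$ from the Weil pairing on the Tate module; condition~(2) is built into~(c); and condition~(3) follows from~(b), since good ordinary reduction makes $\rhobar_{A,p}|_{\Gal_{\Qp}}$ upper-triangular with distinct characters on the diagonal, whose semi-simplification is a direct sum of characters. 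The remaining hypotheses of Theorem~\ref{idealthm:modularity lifting} are also satisfied: purity is Weil's theorem, the Hodge--Tate weights $0,0,1,1$ come from $H^1$ of~$A$, and ordinariness plus $p$-distinguishedness come from~(b). The hypothesis then provides an ordinary classical Siegel modular form $f$ of weight at least~$3$ with $\rhobar_{f,p}\cong\rhobar_{A,p}$, so that $\rhobar_{A,p}$ is modular in the sense of Definition~\ref{defn:modular}, and Theorem~\ref{idealthm:modularity lifting} concludes.

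For $A$ with $\End(A_{\Qbar})=\Z$, the existence of such~$p$ is standard. By Serre's theorem on the image of Galois, the image of $\rhobar_{A,p}$ contains $\Sp_4(\F_p)$ for all but finitely many~$p$, while for abelian surfaces in this generic case a positive proportion of primes are of good ordinary reduction with distinct Frobenius eigenvalues. A Chebotarev density argument then produces a prime~$p$ satisfying all of~(a)--(c) simultaneously. The resulting~$p$ is then fed into the strategy of the previous paragraph. The fact that the residual modular form $f$ has weight~$\ge 3$ rather than weight~$2$ is not an obstacle, since Theorem~\ref{idealthm:modularity lifting} requires only residual modularity in the abstract sense of Definition~\ref{defn:modular}; the transition from weight~$\ge 3$ residual modularity to the weight-$2$ lift $\rho_{A,p}$ is handled internally to the proof of that theorem by the Sen-equals-Cousin classicality result (Theorem~\ref{thm-Sen-Cousin}).

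The main obstacle is the \emph{challenging} case~(2) of~\cite[\S 9.2]{BCGP}, where $\End(A_K)\otimes\Q$ is strictly larger than $\Q$ for some quadratic extension $K/\Q$. In that case the image of $\rhobar_{A,p}$ is forced into a proper subgroup of $\GSp_4(\F_p)$ and may fail the ``large image'' condition of Theorem~\ref{idealthm:modularity lifting}. The two natural routes are: (i) exploit the isogeny structure to reduce to modularity of elliptic curves or Hilbert modular forms over~$K$ (handled by Wiles, Freitas--Le Hung--Siksek, and Caraiani--Newton in the respective real and imaginary quadratic subcases); or (ii) pass to a suitable totally real solvable extension $F/\Q$ over which the residual image becomes large, apply the hypothesis to $\rhobar_{A,p}$ over~$\Q$ to obtain residual modularity (which restricts to residual modularity of $\rhobar_{A,p}|_{\Gal_F}$), run a Hilbert--Siegel variant of Theorem~\ref{idealthm:modularity lifting} to obtain modularity of $\rho_{A,p}|_{\Gal_F}$, and then descend via solvable base change. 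Routing all challenging cases through one of these strategies (or a combination of the two, since the isogeny-type sub-subcases differ) is where the real content of the deduction lies.
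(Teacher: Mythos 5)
Your high-level plan — pick a good auxiliary prime $p$, feed $\rhobar_{A,p}$ into the Serre-type hypothesis to get residual modularity, then invoke Theorem~\ref{idealthm:modularity lifting} — is the right skeleton, and the observation that the hypothesis produces a weight $\ge 3$ ordinary form which is then fed into the Hida/Sen-equals-Cousin machinery inside the lifting theorem is the correct way to read the weight discrepancy. But the proposal does not actually prove the lemma, because the conclusion ``all abelian surfaces $A/\Q$ are modular'' is unqualified, and your argument genuinely closes only the case $\End(A_{\Qbar})=\Z$ with large residual image.

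The concrete gap is the challenging case~\ref{item:challenging-2} of~\cite[\S 9.2]{BCGP}. You correctly identify it as the obstruction, but the two ``natural routes'' you sketch do not close it, even conditionally on the hypothesis. Route~(i) — reduce to elliptic curves or $\GL_2$-type surfaces over $K$ — fails because, as the paper itself states, the modularity of a simple abelian surface of $\GL_2$-type over a real quadratic field $K$ (the $\End(A_K)\otimes\Q$ real-quadratic subcase) is open in general, and Caraiani--Newton handles only ``many cases'' when $K$ is imaginary quadratic; so you cannot simply cite existing theorems here, you must again use the Serre-type hypothesis, which you have not done. Route~(ii) is worse: base changing to a totally real extension $F$ cannot enlarge $\rhobar_{A,p}(\Gal_F)$ — the image can only shrink under restriction — so ``pass to $F$ over which the residual image becomes large'' is not a coherent operation. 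What the argument actually requires is a version of the modularity lifting theorem robust enough to apply when the image is induced (or more generally small), not a device to make the image big. Writing ``routing all challenging cases through one of these strategies is where the real content of the deduction lies'' is an accurate diagnosis but is not a proof; as written, the proposal reduces the lemma to a claim at least as hard as the lemma itself for the $\GL_2$-type subcase. A further, more minor, unverified point: you assert by Chebotarev that a prime $p$ satisfying (a)--(c) simultaneously exists for any $A$ with $\End(A_{\Qbar})=\Z$; the existence of a good ordinary prime with distinct mod-$p$ Frobenius eigenvalues compatible with the polarization degree and large image does follow from known results (Serre, Ogus/Katz for positive-density ordinary reduction), but this needs an actual citation and a short argument, since it is load-bearing.
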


\section*{Acknowledgements.}
It is a pleasure to thank George Boxer, Frank Calegari, and Vincent Pilloni for their friendship and collaboration, and for their many explanations to me of some of the ideas surveyed here.
I would also like to thank them, together with Matthew Emerton, for their comments on an earlier version of this article.

More generally, my understanding of the mathematics around the Taylor--Wiles method  was shaped over many conversations with (in roughly chronological order) Mark Kisin, Richard Taylor, Tom Barnet-Lamb, David Geraghty, Matthew Emerton, James Newton, George Boxer, Frank Calegari, Jack Thorne and Lue Pan.

Other than the work discussed here, most of my time in the last decade has been spent on my collaborations with Matthew Emerton, and I would like to thank him for the thousands of hours that we have spent discussing mathematics and writing papers together.  

I have been fortunate to collaborate with many other amazing mathematicians who are also wonderful people, and I would like to thank all of my collaborators over the past 20 years.
I owe special thanks to my PhD adviser Kevin Buzzard, and to my postdoctoral advisers Matthew Emerton and Richard Taylor. In particular Kevin spent what is in retrospect an incredible amount of time explaining mathematics to me when I was an undergraduate and postgraduate student.

Finally, in addition to the mathematical debt I owe to them, I would like to thank Kevin Buzzard, Frank Calegari, Ana Caraiani, Matthew Emerton and David Savitt for their constant friendship, support and advice.

This work was supported in part by an ERC Advanced grant and by the Simons Collaboration on Perfection in Algebra, Geometry, and Topology.
This project has received funding from the European Research Council (ERC) under the European Union’s Horizon 2020 research and innovation programme (grant agreement No. 884596).

\bibliographystyle{siamplain}
\bibliography{icm}
\end{document}